\newtheorem{thm}{Theorem}[subsection]
\newtheorem{lem}[thm]{Lemma}
\newtheorem{prop}[thm]{Proposition}
\theoremstyle{definition}
\theoremstyle{remark}
\newtheorem{rem}[thm]{Remark}
\numberwithin{equation}{subsection}
\numberwithin{figure}{section}
\newcommand{\C}{{\mathbb C}}
\newcommand{\D}{{\mathbb D}}
\newcommand{\T}{{\mathbb T}}
\newcommand{\R}{{\mathbb R}}
\newcommand{\Hsp}{\mathscr{H}}
\newcommand{\Mop}{\mathbf{M}}
\newcommand{\Lop}{{\mathbf L}}
\newcommand{\Pop}{{\mathbf P}}
\newcommand{\Hop}{{\mathbf H}}
\newcommand{\Uop}{{\mathbf U}}
\newcommand{\Top}{{\mathbf T}}
\newcommand{\im}{\mathrm{Im}}
\newcommand{\calS}{\mathcal{S}}
\newcommand{\calA}{{\mathcal A}}
\newcommand{\calK}{{\mathcal K}}
\newcommand{\calQ}{{\mathscr Q}}
\newcommand{\calE}{{\mathcal E}}
\newcommand{\hDelta}{\varDelta}
\newcommand{\Psiit}{\mathit{\Psi}}
\newcommand{\re}{\mathrm{Re}}
\newcommand{\diff}{{\mathrm d}}
\newcommand{\diffs}{\mathrm{ds}}
\newcommand{\diffA}{\mathrm{dA}}
\newcommand{\imag}{{\mathrm i}}
\newcommand{\Ordo}{\mathrm{O}}
\newcommand{\dbar}{\bar\partial}
\newcommand{\e}{\mathrm e}
\newcommand{\erf}{\operatorname{erf}}
\DeclareSymbolFont{cyrletters}{OT2}{wncyr}{m}{n}
\DeclareFontFamily{U}{rcjhbltx}{}
\DeclareFontShape{U}{rcjhbltx}{m}{n}{<->rcjhbltx}{}
\DeclareSymbolFont{hebrewletters}{U}{rcjhbltx}{m}{n}
\DeclareMathSymbol{\aleph}{\mathord}{hebrewletters}{39}
\DeclareMathSymbol{\beth}{\mathord}{hebrewletters}{98}
\DeclareMathSymbol{\gimel}{\mathord}{hebrewletters}{103}
\DeclareMathSymbol{\daleth}{\mathord}{hebrewletters}{100}
\DeclareMathSymbol{\lamed}{\mathord}{hebrewletters}{108}
\DeclareMathSymbol{\mem}{\mathord}{hebrewletters}{109}
\DeclareMathSymbol{\ayin}{\mathord}{hebrewletters}{96}
\DeclareMathSymbol{\tsadi}{\mathord}{hebrewletters}{118}
\DeclareMathSymbol{\qof}{\mathord}{hebrewletters}{113}
\DeclareMathSymbol{\shin}{\mathord}{hebrewletters}{152}
\DeclareMathSymbol{\memschloss}{\mathord}{hebrewletters}{77}
\DeclareMathSymbol{\nunlange}{\mathord}{hebrewletters}{78}
\DeclareMathSymbol{\vav}{\mathord}{hebrewletters}{79}
\DeclareMathSymbol{\tet}{\mathord}{hebrewletters}{84}
\DeclareMathSymbol{\tsadiklange}{\mathord}{hebrewletters}{90}
\DeclareMathSymbol{\He}{\mathord}{hebrewletters}{104}
\DeclareMathSymbol{\kaf}{\mathord}{hebrewletters}{107}
\DeclareMathSymbol{\nun}{\mathord}{hebrewletters}{110}
\DeclareMathSymbol{\pei}{\mathord}{hebrewletters}{112}
\DeclareMathSymbol{\resh}{\mathord}{hebrewletters}{114}
\DeclareMathSymbol{\samekh}{\mathord}{hebrewletters}{115}
\DeclareMathSymbol{\Het}{\mathord}{hebrewletters}{116}
\DeclareMathSymbol{\vav}{\mathord}{hebrewletters}{119}
\DeclareMathSymbol{\het}{\mathord}{hebrewletters}{120}
\DeclareMathSymbol{\yod}{\mathord}{hebrewletters}{121}
\DeclareMathSymbol{\zayin}{\mathord}{hebrewletters}{122}
\newcommand{\Afun}{\mathrm{A}}
\newcommand{\Bfun}{\mathrm{B}}
\newcommand{\Ffun}{\mathrm{F}}
\newcommand{\Rfun}{\mathrm{R}}
\newcommand{\Hfun}{\mathrm{H}}
\newcommand{\Gfun}{\mathrm{G}}
\newcommand{\Vfun}{\mathrm{V}}
\newcommand\clos{\operatorname{clos}}
\newcommand\apar{a}
\newcommand{\Pol}{\mathrm{Pol}}
\begin{document}

\title[Soft Riemann-Hilbert problems
and planar orthogonal polynomials]
{Soft Riemann-Hilbert problems
and planar orthogonal polynomials}

\thanks{This research was supported by Vetenskapsr\aa{}det
(VR grant 2020-03733), by the Leverhulme trust (grant VP1-2020-007),
and by grant 075-15-2021-602 of the Government of the Russian Federation for the 
state support of scientific research, carried out under the supervision 
of leading scientists.}

\author[Hedenmalm]{Haakan Hedenmalm}

\address{Hedenmalm: 1. 
Department of Mathematics
\\
The Royal Institute of Technology
\\
S -- 100 44 Stockholm
\\
SWEDEN.
2. Department of Mathematics and Computer Sciences
\\
St Petersburg State University
\\
St Petersburg 
\\
RUSSIA.
3. Department of Mathematics and Statistics
\\
University of Reading
\\
Reading
\\
United Kingdom
}

\email{haakanh@math.kth.se}





\keywords{Planar orthogonal polynomials, exponentially varying weights, Bergman kernel expansion,
Riemann-Hilbert problem, dbar-problem}

\date{\today}
\begin{abstract}
Riemann-Hilbert problems are jump problems for holomorphic functions
along given interfaces. They arise in various contexts, e.g. in the
asymptotic study of certain nonlinear partial differential equations
and in the asymptotic analysis of orthogonal polynomials. 
Matrix-valued Riemann-Hilbert problems were considered by Deift et al. in
the 1990s with a noncommutative adaptation of the steepest descent method.
For orthogonal polynomials on the line or on the circle with respect to
exponentially varying weights, this led to a strong asymptotic expansion
in the given parameters. 
For orthogonal polynomials with respect to exponentially varying weights in 
the plane, the corresponding asymptotics was obtained by Hedenmalm and 
Wennman (2017), based on the technically involved construction of an
invariant foliation for the orthogonality.  
Planar orthogonal polynomials are characterized in terms of a certain matrix 
$\dbar$-problem (Its, Takhtajan), which we refer to as a \emph{soft 
Riemann-Hilbert problem}. 
Here, we use this perspective to offer a simplified  
approach based not on foliations but instead on the ad hoc insertion of an 
algebraic ansatz for the Cauchy potential in the soft Riemann-Hilbert problem.
This allows the problem to decompose into a hierarchy of scalar
Riemann-Hilbert problems along the interface (the free boundary for
a related obstacle problem). Inspired by microlocal analysis, the method
allows for control of the
solution in such a way that for real-analytic weights, the asymptotics
holds in the $L^2$ sense with error $\Ordo(\e^{-\delta\sqrt{m}})$ in a 
fixed neighborhood of the closed exterior of the interface, for some constant
$\delta>0$,
where $m\to+\infty$. Here, $m$  is the degree of the polynomial, and in terms 
of pointwise asymptotics,  the expansion
dominates the error term in the exterior domain and across the interface 
(by a distance proportional to $m^{-\frac14}$).
In particular, the zeros of the orthogonal polynomial are located 
elsewhere. 
\end{abstract}
\maketitle

\section{The Riemann-Hilbert problem for orthogonal polynomials}

\subsection{Notation}
We write $\C$ for the complex plane, $\D:=\{z\in\C:\,|z|<1\}$ for the open unit disk, 
$\D_\e:=\{z\in\C:\,|z|>1\}$ for the exterior disk,  and $\T=\partial\D$ for the unit circle. 
We let $\diffA$ and $\diffs$ stand for the normalized area and length
measures: $\diffA(z)=\pi^{-1}\diff x\diff y$ and $\diffs(z)=(2\pi)^{-1}|\diff z|$, for 
$z=x+\imag y$. Moreover, we use the complex derivative notation
\[
\partial_z=\tfrac12(\partial_x-\imag\partial_y),\quad 
\dbar_z:=\tfrac12(\partial_x+\imag\partial_y)
\]
where, e.g., $\partial_x$ stands for the partial derivative with 
respect to $x$. Also, we use $\hDelta_z=\partial_z\dbar_z=\frac14(\partial_x^2+\partial_y^2)$
for the Laplacian. When it is not needed for clarity, the subscript $z$ may be omitted.
We talk about the standard classes of smooth functions, $C^k,C^\infty,C^\omega$, for $k$ times
differentiable, infinitely differentiable, and real-analytically smooth functions, respectively. 
Occasionally, we will meet the classes $C^{k,\alpha}$ as well, where $k$ is a nonnegative 
integer and $0<\alpha\le1$. Functions in $C^{k,\alpha}$ have all their partial derivatives up
to and including order $k$ in the H\"older class with exponent $\alpha$ (locally). 

We use $A\ll B$ to denote that $A =\Ordo(B)$ in some limit procedure, and if both $A=\Ordo(B)$
and $B=\Ordo(A)$ then we write $A\asymp B$.

\subsection{The confining potential $Q$}
Let $Q:\C\to\R$ be a $C^2$-smooth function with at least logarithmic growth:
\begin{equation}
Q(z)\ge (1+\varepsilon_0)\log |z|+\Ordo(1)\quad\text{as}\,\,\,|z|\to+\infty,
\label{eq:mingrowthQ}
\end{equation}
for some small positive constant $\varepsilon_0$. We consider the cone $\mathrm{SH}(\C)$ 
of all subharmonic functions $q:\C\to\R\cup\{-\infty\}$, and, given a real parameter $\tau$ with
$0<\tau<+\infty$, the convex subset $\mathrm{Subh}_\tau(\C)$ of functions 
$q\in\mathrm{Subh}(\C)$ with 
\[
q(z)=\tau \log|z|+\Ordo(1) \quad\text{as}\,\,\,|z|\to+\infty.
\]
For $0<\tau<1+\varepsilon_0$, we consider the obstacle problem with $Q$ being the obstacle 
from above, while we optimize over functions in $\mathrm{Subh}_\tau(\C)$ which are dominated 
pointwise by the obstacle. More precisely, we consider the function
\[
\check Q_\tau(z):=\sup\big\{q(z):\,\,q\le Q \,\,\,\text{on}\,\,\,\C, \,\,\,q\in\mathrm{Subh}_\tau(\C)\big\}.
\]
As a matter of definition, $\check Q_\tau\le Q$ holds everywhere. It is a consequence of the general 
methods of obstacle theory that $\check Q_\tau$ is $C^{1,1}$-smooth, which means that its
second order partial derivatives are all locally bounded in the sense of distribution theory. 
Moreover, $\check Q_\tau$ is in $\mathrm{Subh}_\tau(\C)$ as well. The \emph{contact set}
\[
\calS_\tau:=\big\{z\in\C:\,\,\check Q_\tau(z)=Q(z)\big\}
\]
is compact, and off the contact set, $\check Q_\tau$ is harmonic. The contact set $\calS_\tau$ 
will also be referred to as the \emph{spectral droplet}, and it grows with the parameter $\tau$:
\[
\calS_\tau\subset\calS_{\tau'},\qquad 0<\tau<\tau'<1+\varepsilon_0.
\]
Moreover, in the sense of distribution theory, we have the equality 
\[
\hDelta \check Q_\tau=1_{\calS_\tau}\hDelta Q,
\]
where $1_\calE$ denotes the indicator function of $\calE\subset\C$, which equals $1$ 
on $\calE$ and vanishes elsewhere. In particular, $\hDelta Q\ge0$ holds on the spectral droplet 
$\calS_\tau$. 
The growth of $\calS_\tau$ with the parameter $\tau$ is called \emph{weighted Laplacian growth}, 
as the increase in mass in $1_{\calS_\tau}\hDelta Q$ is infinitesimally given by harmonic measure 
for the point at infinity along the interface $\Gamma_\tau:=\partial\calS_\tau$.
For background material, we refer the reader to, e.g., \cite{HM}, \cite{ho}, \cite{HS}, and the book 
\cite{gvt}.

We shall use the potential $Q$ to form the family of
\emph{exponentially varying weights} $\e^{-2mQ}$, where $m$ is a positive real parameter,
and we are interested in asymptotic properties as $m\to+\infty$.  We form the associated
Hilbert space $L^2_{mQ}$ of of equivalence classes of Borel measurable functions with
\begin{equation}
\|f\|_{mQ}:=\langle f,f\rangle_{mQ}^{\frac12}<+\infty,
\label{eq:norm-mQ}
\end{equation}
supplied with corresponding sesquilinear inner product
\begin{equation}
\langle f,g\rangle_{mQ}:=\int_\C f\bar g\,\e^{-2mQ}\diffA.
\label{eq:bilin-mQ}
\end{equation}
For $n=0,1,2,\ldots$, the closed finite-dimensional subspace $\Pol_{mQ,n}$ of $L^2_{mQ}$ 
consisting of polynomials of degree $\le n$ is of interest. If all polynomials of degree 
$\le n$ are represented in $\Pol_{mQ,n}$, that is, if
\begin{equation}
\int_\C (1+|z|^2)^{n}\e^{-2mQ(z)}\diffA(z)<+\infty,
\label{eq:intcond0.01}
\end{equation}
which is a consequence of \eqref{eq:mingrowthQ} for $n<(1+\varepsilon_0)m-1$,
the reproducing kernel of $\Pol_{mQ,n}$ can be used to model a system of $n+1$ particles 
that repel each other by Coulomb interaction, while at the same time being confined by the 
potential $Q$.
To be more precise, the determinantal point process induced by the \emph{correlation kernel} of 
$\Pol_{mQ,n}$ (a variant of the reproducing kernel) models 2D Coulomb gas with a special value
of the temperature parameter in the Gibbs model. For details, see, e.g., \cite{HM}.
The reproducing kernel of $\Pol_{mQ,n}$, call it $K_{m,n}$, may be written in the form
\[
K_{m,n}(z,w)=\sum_{j=0}^{n}e_j(z)\overline{e_j(w)},
\]
where $e_0,\ldots,e_n$ constitutes an orthonormal basis of the space $\Pol_{mQ,n}$.
A particular orthonormal basis is the one that consists of the orthogonal polynomials, normalized
so that each one has norm equal to $1$. Let $P_k$ be the monic orthogonal polynomial of 
degree $k$, which means that the leading coefficient equals $1$:
\[
P_k(z)=z^k+\Ordo(z^{k-1}),\quad\text{as}\,\,\,|z|\to+\infty,
\] 
while $P_k$ is orthogonal to the polynomials of lower degree,
\[
\langle p,P_k\rangle_{mQ}=0,\qquad p\in  \Pol_{mQ,k}.
\]
Then the connection between the orthogonal polynomials and the polynomial reproducing
kernel becomes
\[
K_{m,n}(z,w)=\sum_{j=0}^{n}\|P_j\|_{mQ}^{-2}P_j(z)\overline{P_j(w)}.
\]
In recent work with Wennman \cite{HW1}, an asymptotic expansion formula for the orthogonal 
polynomial  $P_k$ was found, in the instance when $k$ grows in proportion with $m$ as 
$m\to+\infty$. This led to the universal appearance of error function transition for the Coulomb gas 
along the smooth interface $\Gamma_\tau=\partial\calS_\tau$ for $\tau=k/m$ 
(see \cite{HW1}, \cite{HW2}).
For background material, we refer to, e.g., \cite{ahm1}, \cite{ahm2}, and \cite{HM}.  For
related work on the real line in the determinantal case, when orthogonal polynomials are relevant
and enjoy a three-term recursion, 
we refer to \cite{Its1}, \cite{FIK}, \cite{DeiftZhou}, and the book \cite{Deiftbook}.
The proof of the expansion formula in \cite{HW1} is based on the idea of a foliation of a 
neighborhood of the interface $\Gamma_\tau$ by smooth curves where we have (approximately)
the required orthogonality along each curve in the foliation. While that approach has its merits
it may be difficult to expand it beyond the given setting, especially given that the algorithm which 
provides the foliation is rather unwieldy. Here, we supply an alternative approach, which avoids
foliations and instead finds appropriate ad-hoc algebraic formulae for the solution of a
matrix $\dbar$-problem. In so doing, we obtain a simple algebraic framework for the terms of
the asymptotic expansion, expressed in terms of a Neumann series. This permits us to control 
the growth of the terms in the asymptotic expansion using a scale of Banach spaces, analogous
to the Nishida-Nirenberg approach to the Cauchy-Kovalevskaya theorem.

\subsection{A soft Riemann-Hilbert problem for orthogonal polynomials}

We consider a $1\times2$ matrix-valued function $Y=Y(z)$ of the form
\begin{equation}
Y:=\begin{pmatrix}P\,, & \Psiit \\
\end{pmatrix}
\label{eq:Zmatrix}
\end{equation}
where $P$ is an entire functions and $\Psiit$ is $C^1$-smooth
on $\C$. Then
\begin{equation}
\dbar Y=\begin{pmatrix}0\,, & \dbar\Psiit \\
\end{pmatrix}.
\label{eq:Zmatrix2}
\end{equation}
Next, let $W_{mQ}$ be the \emph{soft jump matrix}
\[
W_{mQ}:=\begin{pmatrix}
0 & 
\e^{-2mQ} \\
0 & 0
\end{pmatrix}
\]
and calculate
\[
\bar{Y}W_{mQ}=\begin{pmatrix}\bar P, & \bar{\Psiit} \\
\end{pmatrix}
\begin{pmatrix}
0 & 
\e^{-2mQ} \\
0 & 0
\end{pmatrix}
=\begin{pmatrix}
0\,, & 
\bar P\,\e^{-2mQ} \\
\end{pmatrix}.
\]
Here, we are interested in the asymptotics as $m\to+\infty$. From the 
physical point of view,
the reciprocal $m^{-1}$ plays the role of Planck's constant. 
The corresponding \emph{soft Riemann-Hilbert problem}
\begin{equation}
\dbar Y=\bar Y W_{mQ}
\label{eq:SRH1.Q}
\end{equation}
then amounts to having
\begin{equation}
\dbar\Psiit=
\bar P\,\e^{-2mQ}.
\label{eq:dbar2}
\end{equation}
We would like to couple the soft Riemann-Hilbert problem
\eqref{eq:SRH1.Q} with the following asymptotics at infinity,
for a given nonnegative integer $n$:
\begin{equation}
Y=
\begin{pmatrix}
z^n+\Ordo(|z|^{n-1})\,,& \Ordo(|z|^{-n-1}) \\
\end{pmatrix}
.
\label{eq:SRH2.Q}
\end{equation}
In connection with the Riemann-Hilbert analysis of orthogonal polynomials 
on the real line,
two rows are traditionally used instead of one, and then the asymptotic requirement
\eqref{eq:SRH2.Q} may be written in a more suggestive form (omitted here). 
The condition \eqref{eq:SRH2.Q} involves asymptotics of both $P$ and 
$\Psiit$:
\begin{equation}
P(z)=z^n+\Ordo(|z|^{n-1}),\qquad \Psiit(z)=\Ordo(|z|^{-n-1}),
\label{eq:Ppsicond1}
\end{equation}
as $|z|\to+\infty$. 

\begin{rem}
A word should be said about the terminology. 
After all, Riemann-Hilbert problems are jump problems across 
smooth interfaces, and not $\dbar$-problems like \eqref{eq:SRH1.Q}. 
What speaks in favor of using this terminology is the fact that a holomorphic jump problem 
may be viewed as a $\dbar$-problem in the sense of distribution theory. Indeed, if we 
apply $\dbar$ to a holomorphic function with a jump along an interface, 
the result is, in the sense of distribution theory, a measure supported along the 
interface which amounts to the jump. 
More importantly, in fact, our method of analysis actually decomposes the problem at hand
(given by \eqref{eq:SRH1.Q} and \eqref{eq:SRH2.Q}) into a hierarchy of actual 
Riemann-Hilbert problems, see \cite{HW1}, \cite{HW2}, \cite{HW3}. 
\end{rem}

The appearance of the complex conjugation of the matrix $Y$ on the 
right-hand of \eqref{eq:SRH1.Q} results from the need to characterize the monic
orthogonal polynomial $P$ of degree $n$ with respect to the inner product of 
the Hilbert space $L^2_{mQ}$ given by \eqref{eq:bilin-mQ}.
The complex conjugation does not make the $\dbar$-problem 
any easier, as already the scalar
equation $\dbar u=W\bar u$ is notoriously difficult, see e.g. \cite{KleinMcL}.
The $2\times2$ matrix analogue of the combined problem 
\eqref{eq:SRH1.Q}-\eqref{eq:SRH2.Q} was considered by Its and Takhtajan
\cite{ItsTakhtajan}, and shown to be a way to characterize the orthogonal 
polynomials. We briefly recapture the basic argument. 
Given that $P$ is entire, the first condition in
\eqref{eq:Ppsicond1} says that $P$ is a monic polynomial of degree $n$.
Any solution $\Psi$ to \eqref{eq:dbar2} with the decay
$\Psiit(z)=\Ordo(|z|^{-1})$ at infinity must be of the form of the
\emph{Cauchy potential}
\begin{equation}
\Psiit(z)=
\int_\C\frac{\bar P(\xi)\,\e^{-2mQ(\xi)}}{z-\xi}\diffA(\xi)
\label{eq:Psiint1}
\end{equation}
where $\diffA$ is normalized area measure, provided that the integral
converges. 
The  growth condition \eqref{eq:mingrowthQ} gives that
\begin{equation}
|P(z)|\,\e^{-2mQ(z)}=\Ordo(|z|^{n-2(1+\varepsilon_0)m})\quad\text{as}\,\,\, 
|z|\to+\infty,
\label{eq:massbound1}
\end{equation}
so the condition on $n$ to guarantee the convergence of the integral
 \eqref{eq:Psiint1} is that  
$n<2(1+\varepsilon_0)m-1$. Next, we apply the finite geometric series expansion
\[
\frac{1}{z-\xi}=\frac{1}{z}+\frac{\xi}{z^2}+\ldots+\frac{\xi^{n}}{z^{n+1}}
+\frac{\xi^{n+1}}{z^{n+1}(z-\xi)},
\]
to obtain that
\begin{multline}
\Psiit(z)=\int_\C
\frac{\bar P(\xi)\,\e^{-2mQ(\xi)}}{z-\xi}\diffA(\xi)\\
=\sum_{j=0}^{n}z^{-j-1}\int_\C \xi^j\bar P(\xi)\,\e^{-2mQ(\xi)}\diffA(\xi)
+z^{-n-1}\int_\C\frac{\xi^{n+1}\bar P(\xi)\,\e^{-2mQ(\xi)}}{z-\xi}\diffA(\xi).
\label{eq:Psiint2}
\end{multline}
For this formula to be correct we need all the involved integrals to converge.
In view of \eqref{eq:Psiint1}, this is the case provided that 
$n<(1+\varepsilon_0)m-1$. We also need to estimate the integral
\begin{equation*}
\int_\C\frac{\xi^{n+1}\bar P(\xi)\,\e^{-2mQ(\xi)}}{z-\xi}\diffA(\xi)
\end{equation*}
appearing on the right-hand side of \eqref{eq:Psiint2}. If 
$n\le(1+\varepsilon_0)m-2$ we can estimate 
\[
|\xi^{n+1} P(\xi)|\,\e^{-2mQ(\xi)}\le C (1+|\xi|^2)^{-\frac32}
\]
for some positive constant $C$, so that
\begin{equation}
\bigg|\int_\C\frac{\xi^{n+1}\bar P(\xi)\,\e^{-2mQ(\xi)}}{z-\xi}\diffA(\xi)\bigg|\le C
\int_\C \frac{(1+|\xi|^2)^{-\frac32}}{|z-\xi|}\diffA(\xi)\le 4C.
\label{eq:Psiint3}
\end{equation}
So, we assume that $n\le(1+\varepsilon_0)m-2$ holds.
Identifying terms in \eqref{eq:Psiint2}, it now follows that the second condition in
\eqref{eq:Ppsicond1} amounts to having
\begin{equation}
\int_\C \xi^j\bar P(\xi)\,\e^{-2mQ(\xi)}=0,\qquad j=0,\ldots,n-1,
\label{eq:Psiint4}
\end{equation}
so that $P$ is orthogonal to all polynomials of degree $\le n-1$.
It is now uniquely determined by the first condition in \eqref{eq:Ppsicond1}, i.e.,
that it is a monic polynomial of degree $n$.
In conclusion, the solution $Y$ to \eqref{eq:Zmatrix} with asymptotics
\eqref{eq:SRH2.Q} exists and is uniquely determined: it is given in terms of $P$ and
$\Psiit$, where $\Psiit$ is given by \eqref{eq:Psiint1}, and 
$P=P_n$  is the monic orthogonal polynomial of degree
$n$ with respect to the inner product of $L^2_{mQ}$.


\subsection{Further assumptions on the confining potential $Q$}
\label{ss:further}
If we assume $0<\tau<1+\varepsilon_0$, and that $Q$ is real-analytically smooth 
in a neighborhood of the droplet boundary
$\Gamma_\tau=\partial\calS_\tau$, while $\hDelta Q>0$ holds on $\Gamma_\tau$,
it follows from work of Sakai that $\Gamma_\tau$ consists of real-analytically
smooth arcs with certain types of possible singularities and isolated points. 

To simplify the presentation, we shall focus on the parameter value $\tau=1$.
At $\tau=1$, we now make the following assumptions:

\smallskip

(i) $\Gamma_1$ is a real-analytically smooth Jordan curve, and hence
$\calS_1$ is the simply connected region enclosed by $\Gamma_1$, 

(ii) $Q$ is real-analytically smooth with $\hDelta Q>0$ in a
neighborhood of $\Gamma_1$.

\smallskip

Then $\check Q_1$ equals $Q$ on $\calS_1$ and is harmonic in the
complement $\C\setminus\calS_\tau$, with a $C^{1,1}$-smooth transition
across the interface $\Gamma_1$. Moreover, if we let $\breve Q_1$ denote
the restriction $\check Q_1|_{\C\setminus\calS_1}$, this harmonic function
extends harmonically across $\Gamma_1$, so that $\breve Q_1$ defines a
harmonic function in $\C\setminus\calK$, for some compact subset $\calK$
of the interior of the spectral droplet $\calS_1$. We then form the function
$R=R_1:=Q-\breve Q_1$, which is well-defined and real-valued in
$\C\setminus\calK$. The function $R$ vanishes along with its normal
derivative along the interface $\Gamma_1$ while $\hDelta R=\hDelta Q>0$ holds 
on $\Gamma_1$. The real-analytic smoothness gives that $R$ is real-analytically
smooth near $\Gamma_1$ and we know that $R>0$ on $\C\setminus \calS_1$.
Putting this information together, we see that
$R(z)\asymp [\mathrm{dist}_\C(z,\Gamma_1)]^2$ near $\Gamma_1$,
and that there exists a real-analytically smooth function $\hat R$ in
a neighborhood of $\Gamma_1$ which is a square root of $R$ (so that
$\hat R^2=R$), with $\hat R>0$ on the exterior side of $\Gamma_1$ while
$\hat R<0$ holds on the interior side. This function has a natural
$C^2$-smooth extension to $\C\setminus\calK$, also denoted by $\hat R$,
which keeps the property $\hat R^2=R$, provided the compact subset
$\calK\subset\mathrm{int}\,\calS_1$ is big enough. The extended function
is positive exteriorly to $\Gamma_1$ and negatively interiorly, while
it vanishes precisely along $\Gamma_1$. 

\subsection{Approach to the solution of the soft Riemann-Hilbert problem}
The approach in \cite{HW1} is based on two things: (i) the collapsed orthogonality
equations, and (ii) the  the notion of the \emph{orthogonal foliation flow}.
The collapsed orthogonality equations supply algorithmically the successive terms of the 
expansion formula for the orthogonal polynomial $P$, but are unable to show that the
expansion formula is valid in the first place. The orthogonal foliation flow resolves that
difficulty by supplying an invariant foliation of a neighborhood of $\Gamma_1$ by curves 
on which approximate orthogonality to lower order polynomials holds, and integration
over the flow parameter gives approximate orthogonality with respect to 
$\e^{-2mQ}\diffA$ near $\Gamma_1$. Appropriate glueing procedures, supported in 
part by H\"ormander's $\dbar$-estimates, then show that the approximate orthogonal
polynomial is close to the actual orthogonal polynomial $P$.  The argument is quite 
involved and moreover does not connect $P$ with the $\dbar$-potential $\Psiit$ in 
\eqref{eq:Zmatrix}. Here, we propose a \emph{novel approach} which calculates 
directly candidates for $P$ and $\Psiit$ from a hierarchy of Riemann-Hilbert problems along
$\Gamma_1$, based on an ad-hoc hypothesis on the shape of $P$ and $\Psiit$. Once
the guess is accurate enough, the algorithm kicks in and produces the successive
terms of the asymptotic expansion.
The algorithm also allows for control of the growth of the terms in the expansions as a 
Neumann series, which gives asymptotic control of the error terms, and shows that the 
expansions are valid in a mesoscopic band of width proportional to $m^{-\frac14}$
around $\Gamma_1$, which is substantially wider than the microscopic basic length 
scale $m^{-\frac12}$. The width established in \cite{HW1} was only 
$A m^{-\frac12}\log^{\frac12} m$ for an arbitrarily large positive constant $A$.

\subsection{The ad-hoc shape of $P,\Psiit$ }

We let $\calQ=\calQ_1$ denote the bounded holomorphic function in
$\C\setminus \calS_1$ with $\re \calQ_1=Q$ on $\Gamma_1$ and
$\im \calQ_1(\infty)=0$. Given the real-analytic smoothness of
$Q$ near $\Gamma_1$ and the smoothness of $\Gamma_1$, $\calQ_1$ extends
analytically across $\Gamma_1$. If
\[
\D_\e:=\{z\in\C:\,|z|>1\}
\]
is the punctured exterior disk, and 
$\phi=\phi_1:\C\setminus \calS_1\to\D_\e$ is the conformal mapping which
is onto and preserves the point at infinity, with $\phi'_1(\infty)>0$, then 
\begin{equation}
\breve Q_1=\re\calQ_1+\log|\phi_1|.
\label{eq:Qbreverel}
\end{equation}
In the sequel, we frequently suppress the subscript $1$ and prefer
to write $\phi$ in place of $\phi_1$.  
We solve the soft Riemann-Hilbert problem \eqref{eq:SRH1.Q}
approximately in an iterative fashion. 
The algorithm begins with assuming that $P$ is of the form
\begin{equation*}
P\sim c_{m,n}\phi^n\e^{m\calQ}F, \qquad F=F_0+m^{-1}F_1+m^{-2}F_2+\ldots,
\end{equation*}
where $c_{m,n}$ is a constant, which is fixed by the requirement that 
$F(\infty)=\phi'(\infty)$. In view of the requirement \eqref{eq:Ppsicond1},
 the value of the constant is then
\begin{equation}
c_{m,n}:=(\phi'(\infty))^{-n-1}\e^{-m\calQ(\infty)}>0.
\label{eq:cmn}
\end{equation}
We keep the ratio $\tau=n/m$ fixed, and since we focus on the parameter value 
$\tau=1$, we take $n=m$, so that $m$ is a positive integer which tends to infinity:
\begin{equation}
P\sim c_m\,\phi^m\e^{m\calQ}F, \qquad F=F_0+m^{-1}F_1+m^{-2}F_2+\ldots,
\label{eq:Pform}
\end{equation}
where we use the shorthand notation $c_m:=c_{m,m}$.
For $j=0,1,2,\ldots$, the functions $F_j$ are all supposed to be
independent of the parameter $m$ and as functions bounded and holomorphic
in a domain $\C\setminus\calK$, on which $\calQ$
and $\phi$ are well-defined (holomorphic and conformal, respectively),
for some compact subset $\calK\subset\text{int}\,\calS_1$. In the
representation \eqref{eq:Pform}, we do not really care what happens in
the compact $\calK$, which may seem odd at first glance. However, this is
actually a typical feature of the asymptotics of unweighted Bergman
polynomials studied by Carleman \cite{Carleman} (see also \cite{Carl2}),
following the lead of Szeg\H{o} \cite{szego} (see also \cite{Szeg-book}). 
The analogous instance of a 
fixed weight was only recently analyzed in terms of asymptotic expansion by
Hedenmalm and Wennman \cite{HW3}. 
We should mention that in \eqref{eq:Pform} the series for $F$ need not
converge. Instead, 
the precise statement would be to say that that for any integer
$k=1,2,3,\ldots$,
\begin{equation}
F=F_0+m^{-1}F_1+\ldots+m^{-k}F_k+\Ordo(m^{-k-1}),
\label{eq:Fexpansion}
\end{equation}
with a uniform error bound in $\C\setminus\calK$. In fact, we shall
obtain rather strong control of the growth of
the error term which in turn tells us how big is the optimal $k$ in
the expansion \eqref{eq:Fexpansion}, which gives an expansion with $k$
proportional to $m$ and rapidly decaying error
$\Ordo(\e^{-\epsilon \sqrt{m}})$ for some small positive $\epsilon$. 
The complete analysis requires an analysis of the function $\Psiit$ as well.
We look for $\Psiit$ of the form
\begin{equation}
\Psiit=c_m\,m^{-\frac12}
\phi^{-m}\e^{-m\calQ}
\big\{A \erf(2m^{\frac12}V)+(2\pi m)^{-\frac12}B\chi_1\,\e^{-2mR} \big\},
\label{eq:Psiform}
\end{equation}
where $V$ is a regularized version of $\hat R$: we require that
$V=\hat R$ near $\Gamma_1$ but allow for $V$ to be different further away. 
Here, we use the notation
\[
\erf(x):=\frac{1}{\sqrt{2\pi}}\int_{-\infty}^{x}\e^{-t^2/2}\diff t
\]
for the standard Gaussian error function. Like the function $F$, the
functions $A$ and $B$ are understood in terms of asymptotic expansion,
\begin{equation}
  A=A_0+\ldots+m^{-k}A_k+\Ordo(m^{-k-1}),\quad
  B=B_0+\ldots+m^{-k}B_k+\Ordo(m^{-k-1}).
\label{eq:ABexpansions}
\end{equation}
Like the $F_j$, the functions $A_j$ are all defined and bounded holomorphic in
$\C\setminus\calK$, with asymptotics $\Ordo(|z|^{-1})$ at infinity,
but the $B_j$ are only smooth and also have a more fleeting existence:
they need only be well-defined
in a fixed neighborhood of the loop $\Gamma_1$. The cut-off function $\chi_1$
of \eqref{eq:Psiform} 
solves this issue. It has $0\le\chi_1\le1$ globally, while $\chi_1=1$ on a
fixed small neighborhood of $\Gamma_1$ and $\chi_1=0$ off a slightly bigger
neighborhood of $\Gamma_1$. We make sure the latter neighborhood is contained
strictly inside the region where $B$ is smoothly defined.
This way we get a globally well-defined smooth function $B\chi_1$ by
declaring that it vanishes wherever $B$ is undefined. 

The final step is to show  that the thus defined approximate solution
is close to the true solution $Y=(P\,,\,\,\Psiit)$
where $P$ is the monic orthogonal
polynomial of degree $n$ and $\Psiit$ is given by \eqref{eq:Psiint1}. 

As a side remark, if we think of the expressions
\[
A\erf(2m^{\frac12}V),\quad\text{}\quad
(2\pi m)^{-\frac12}B\,\e^{-2mR}
\]
as quantum waves, then the first is the principal
wave and the second serves as a ``local glue''.  Both types of quantum waves
are needed for the algorithm discussed below.

\subsection{Glimpses of the algorithm} 
\label{ss:glimpses}
Assuming that $\Psiit$ is given by the formula \eqref{eq:Psiform}, where $A$ is
holomorphic, its $\dbar$-derivative equals
\begin{equation}
\dbar \Psiit=(2/\pi)^{\frac12}c_m\,\phi^{-m}\e^{-m\calQ}\big(A\dbar \hat R-
B^1\dbar R+\tfrac12 m^{-1}\dbar B^1\big)\e^{-2mR}
\label{eq:dbar3.0}
\end{equation}
in a neighborhood of $\Gamma_1$, where we agree to the
notation $B^1:=B\chi_1$.
On the other hand, if $P$ is given by
\eqref{eq:Pform}, we may use the fact \eqref{eq:Qbreverel}: 
\begin{equation}
\bar P\,\e^{-2mQ}
=
\bar F{\phi^{-m}\e^{-m\calQ}}\,\e^{-2mR}.
\label{eq:dbar3.01}
\end{equation}
The equation \eqref{eq:dbar2}  requires the expressions \eqref{eq:dbar3.0}
and \eqref{eq:dbar3.01} to coincide, which reduces to the equation
\begin{equation}
A\dbar\hat R-2
B\hat R\,\dbar \hat R+\tfrac12 m^{-1}\dbar B=(\pi/2)^{\frac12}\bar F
\label{eq:dbar3.02}
\end{equation}
in a small neighborhood of $\Gamma_1$, since $B^1=B\chi_1=B$ there.
Note that we used the fact that
$\dbar R=\dbar(\hat R)^2=2\hat R\dbar\hat R$.
We restrict to $\Gamma_1$ and use that $\hat R$ vanishes on the curve:
\begin{equation}
A\dbar\hat R
+\tfrac12 m^{-1}\dbar B=
(\pi/2)^{\frac12}\bar F\quad\text{on}\quad\Gamma_1.
\label{eq:dbar3.03}
\end{equation}
Taken together with the asymptotics \eqref{eq:Ppsicond1}, which
amounts to having the asymptotics as $|z|\to+\infty$
\begin{equation}
F(z)=\phi'(\infty)+\Ordo(|z|^{-1}),\quad
A(z)=\Ordo(|z|^{-1}),
\label{eq:asympFA}
\end{equation}
the equation \eqref{eq:dbar3.03} amounts to a Riemann-Hilbert problem
in an alternative coordinate chart (where the unit circle $\T$
replaces $\Gamma_1$). We explain the
details of this step in Section \ref{sec:algo} below.
For the moment we observe that
the equation involves an unknown $B$, but luckily it is of higher order,
so it does influence the first terms $A_0$ and $F_0$ which get determined
right away by the Riemann-Hilbert problem. 
The original equation \eqref{eq:dbar3.02} contains more information than
\eqref{eq:dbar3.03} alone. Suppose for the moment we were able to solve
the equation \eqref{eq:dbar3.03}. Then we may solve for $B$ using
\eqref{eq:dbar3.02}:
\begin{equation}
B=\frac{A\dbar\hat R+\tfrac12 m^{-1}\dbar B-(\pi/2)^{\frac12}\bar F}
{2\hat R\,\dbar \hat R}.
\label{eq:dbar3.04}
\end{equation}
Since $\hat R$ vanishes only to degree $1$ with $\dbar \hat R\ne0$ along
$\Gamma_1$, the division produces a smooth function, and if the numerator
is real-analytic across $\Gamma_1$, so is the ratio and hence $B$. 
The combination of the Riemann-Hilbert ``jump'' problem
\eqref{eq:dbar3.03} and the smooth division problem \eqref{eq:dbar3.04}
supplies the full algorithm. We may liken it to the Newton algorithm for
finding the zeros of polynomials: once we are in the ballpark the
algorithm gets us ever closer to the solution.  We use
\eqref{eq:dbar3.03} with $B=0$ to get the initial $A$ and $F$. Next,
we apply \eqref{eq:dbar3.04} with the previous choices of $A,F$, and
$B$ to get an updated choice for $B$. This new $B$ is then implemented
into \eqref{eq:dbar3.03} to get improved $A$ and $F$. Proceeding
iteratively we obtain the full asymptotic expansion. 

Our algorithm only gives the asymptotic expansion for the functions
$A,B,F$, and not the functions themselves, with Gevrey class growth of
the coefficients. To pass from asymptotic expansion to actual functions
with the indicated asymptotic behavior, we may e.g. employ the methods
of \cite{HedAM91}. This gives us smooth functions which are holomorphic
whenever the asymptotic expansions are. 

\subsection{Main results}

Apart from the approach based on the ad-hoc shape of the solution as outlined in the
prior two subsections, which is interesting per se and may carry over in other situations, 
it is valuable that we may achieve better control of the error in our approximations than
what was available earlier.
If we write 
\[
F^\approx=F^{\langle\kappa\rangle}:=F_0+m^{-1}F_1+\ldots+m^{-\kappa}F_\kappa,
\]
where $\kappa=\kappa_*(m)\asymp m^{\frac12}$ is appropriately chosen, and the
coefficient functions $F_j$ come from the algorithm presented in Subsection 
\ref{ss:glimpses}, we have the following.

\begin{thm}
We assume that the properties \textrm{(i)-(ii)} of Subsection \ref{ss:further} hold, and put 
$P^\approx:=c_m  \phi^m \e^{m\calQ}F^\approx$, where $F^\approx$ is as above. Then
each $F_j$ extends holomorphically to a fixed neighborhood of  
$\C\setminus\mathrm{int}\,\calS_1$ for each $j=0,\ldots,\kappa_*(m)$, while $\log F_0$ 
is bounded and holomorphic in the same neighborhood.  
Then there exists a fixed $C^\infty$-smooth cut-off function $\chi_{1,1}$ on $\C$, 
with $0\le\chi_{1,1}\le1$, which equals $1$
in an open neighborhood of the closure of $\C\setminus\calS_1$,  and vanishes off
a slightly larger neighborhood, such that $\chi_{1,1}P^\approx $ becomes globally 
well-defined. Moreover, it is close to the monic orthogonal polynomial $P$ of degree $n=m$
in $L^2_{mQ}$:
\[
\|P-\chi_{1,1}P^\approx\|_{mQ}=\Ordo(c_m m^{\frac12}\e^{-\epsilon\sqrt{m}}),\quad
\text{where}\,\,\,\|\chi_{1,1}P^\approx\|_{mQ}\asymp c_m m^{-\frac14}.
\]
Here, $c_m=c_{m,m}$ is given by \eqref{eq:cmn} and $\epsilon>0$ is a constant
that only depends on $Q$. It follows that
\begin{equation*}
P
=c_m\phi^m\e^{m\calQ}\big(F^\approx +\Ordo(m\,\e^{-\frac12\epsilon\sqrt{m}})
\big),\quad\text{on}\,\,\, D_m,
\end{equation*}
holds in the uniform norm as $m\to+\infty$, where $D_m$ is the union of 
$\C\setminus\calS_1$ and a certain band of width $\asymp m^{-\frac14}$ around the loop 
$\Gamma_1=\partial\calS_1$.
\label{thm:main}
\end{thm}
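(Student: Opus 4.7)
The plan is to construct the coefficient functions $F_j,A_j,B_j$ recursively via the Riemann-Hilbert/smooth-division algorithm of Subsection \ref{ss:glimpses}, establish quantitative Gevrey-type growth bounds for them on a scale of Banach spaces of holomorphic functions, and then compare $\chi_{1,1}P^\approx$ with the actual orthogonal polynomial $P$ via a weighted $\dbar$-estimate, concluding through the $L^2_{mQ}$-minimality of $P$.

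First, I would fix a nested family $\{\calB_\rho\}_{0<\rho\le\rho_0}$ of Banach spaces of bounded holomorphic functions on a tubular neighborhood of $\Gamma_1$ of thickness $\rho$, glued to $\C\setminus\calS_1$. Expanding the ansatz and matching powers of $m^{-1}$ in \eqref{eq:dbar3.03}-\eqref{eq:dbar3.04}, the leading order $j=0$ with $B=0$ yields a scalar Riemann-Hilbert problem on $\Gamma_1$ for the pair $(A_0,F_0)$ (holomorphic on $\C\setminus\calS_1$ with the asymptotics \eqref{eq:asympFA}). Transporting via $\phi$, this becomes a classical factorization problem on $\T$ solved by a Szeg\H{o}-type formula, giving $\log F_0$ bounded holomorphic with $F_0(\infty)=\phi'(\infty)$. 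Inserting $A_0,F_0$ into \eqref{eq:dbar3.04} then produces $B_0$, which is real-analytically smooth across $\Gamma_1$ since $\hat R$ vanishes simply there with $\dbar\hat R\ne0$ and the numerator is real-analytic. At order $j\ge1$ the right-hand sides involve only $A_k,B_k,F_k$ with $k<j$ together with one additional $\dbar$ applied to $B_{j-1}$, so the two operations are iterated in lockstep.

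The core analytic step is a Gevrey-$\tfrac12$ estimate of the shape
\[
\|F_j\|_{\calB_\rho}+\|A_j\|_{\calB_\rho}+\|B_j\|_{\calB_\rho}\le M^j\,(j!)^{\frac12}
\]
for a suitable choice of $\rho$. Each Riemann-Hilbert solution step is bounded on the scale $\{\calB_\rho\}$, but the division step \eqref{eq:dbar3.04} carries a $\dbar B_{j-1}$ which, by Cauchy's inequality, costs a factor $(\rho-\rho')^{-1}$ upon retreating from radius $\rho$ to radius $\rho'$. This is exactly the Nishida-Nirenberg mechanism: distributing a loss of order $1/(j+1)^2$ in the radii geometrically across steps produces $(j!)^{\frac12}$ growth, which is precisely what is needed to truncate at $\kappa=\kappa_*(m)\asymp m^{\frac12}$ with residual $\Ordo(m^{-\kappa})=\Ordo(\e^{-\epsilon\sqrt{m}})$ for a small $\epsilon>0$.

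With $F^\approx,A^\approx,B^\approx$ in hand, assemble $P^\approx,\Psiit^\approx$ by \eqref{eq:Pform}-\eqref{eq:Psiform}. By construction, \eqref{eq:dbar3.02} fails only by a term of size $\Ordo(\e^{-\epsilon\sqrt{m}})$ in a fixed neighborhood of $\Gamma_1$; combined with the Gaussian factor $\e^{-2mR}$, whose Laplace-type mass on a band of thickness $m^{-\frac14}$ gives $\|\chi_{1,1}P^\approx\|_{mQ}\asymp c_m m^{-\frac14}$, this produces an $L^2_{mQ}$-residual of size $c_m m^{\frac14}\e^{-\epsilon\sqrt{m}}$; the defect generated by $\nabla\chi_{1,1}$ is supported where $R\ge\delta>0$ and is exponentially smaller still. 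Applying H\"ormander's $L^2$ $\dbar$-estimate with weight $\e^{-2mQ}$ (plus a tiny logarithmic correction to accommodate polynomial growth) yields $u$ whose $\dbar u$ equals the soft Riemann-Hilbert residual and whose $\|u\|_{mQ}$ is of the same order, so that $\chi_{1,1}P^\approx-u$ is a genuine monic polynomial of degree $m$; the $L^2_{mQ}$-minimality of $P$ then yields $\|P-\chi_{1,1}P^\approx\|_{mQ}=\Ordo(c_m m^{\frac12}\e^{-\epsilon\sqrt{m}})$. The pointwise statement on $D_m$ follows from applying a sub-mean-value inequality to the holomorphic quotient $(P-\chi_{1,1}P^\approx)(c_m\phi^m\e^{m\calQ})^{-1}$ on Euclidean disks of radius $\asymp m^{-\frac12}$; the band width $\asymp m^{-\frac14}$ then reflects the threshold $2mR\ll\sqrt{m}$, i.e. $\mathrm{dist}(z,\Gamma_1)\ll m^{-\frac14}$. \emph{The main obstacle} is the coupled Nishida-Nirenberg iteration on \eqref{eq:dbar3.03}-\eqref{eq:dbar3.04}: controlling how the non-local Riemann-Hilbert solution operator interacts with the local division by $2\hat R\,\dbar\hat R$ on a shrinking scale of analytic neighborhoods, and verifying that the joint iteration remains bounded with the correct Gevrey exponent so that the analytic radius is not exhausted within $\kappa_*(m)$ steps.
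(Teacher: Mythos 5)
Your overall strategy is the same as the paper's: iterate the Riemann--Hilbert/division scheme, control the terms on a shrinking scale of Banach spaces \`a la Nishida--Nirenberg, truncate at a degree $\kappa_*(m)$, and pass from the cut-off approximation to the true orthogonal polynomial via H\"ormander's $\dbar$-estimate and duality. However, two pieces of the argument as you describe it would not go through.

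\textbf{The Gevrey exponent is off, and the mismatch is self-contradictory.} You claim the coefficient bound $\|F_j\|+\|A_j\|+\|B_j\|\le M^j (j!)^{\frac12}$ and then assert that this is ``precisely what is needed to truncate at $\kappa_*(m)\asymp m^{\frac12}$ with residual $\Ordo(\e^{-\epsilon\sqrt m})$.'' These two statements do not fit together. If the $j$-th term of a series in $s=m^{-1}$ has coefficient $\asymp M^j(j!)^{\alpha}$, the optimal truncation is at $j\asymp (m/M)^{1/\alpha}$ with residual $\asymp\e^{-\alpha(m/M)^{1/\alpha}}$. With $\alpha=\tfrac12$ one would truncate at $\kappa\asymp m^2$ and get residual $\e^{-cm^2}$; with $\alpha=2$ one gets $\kappa\asymp m^{\frac12}$ and $\e^{-c\sqrt m}$. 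The paper's Theorem \ref{lem:basicest} establishes exactly $\|\Top^k f\|_{\Hsp^\infty_{\frac12\sigma_0}}\le M_1^k k^{2k}\|f\|_{\Hsp^\infty_{\sigma_0}}$, i.e.\ $(k!)^2$-type growth, which is the only exponent compatible with $\kappa_*(m)\asymp\sqrt m$. The source of the factor $(k!)^2$ rather than $(k!)^{\frac12}$ or $k!$ is that each application of $\Top$ costs \emph{two} powers of the radius loss: one from the Cauchy estimate for $\dbar$ (as you note), and one more from the operator $\Mop[f]=(f-\Uop_{\D_\e}[f])/(1-|\zeta|^2)$, where the Weierstrass division by $1-z\bar w$ again costs $(\sigma-\sigma')^{-1}$; the combined loss per step is $\asymp (\sigma-\sigma')^{-2}$ (see \eqref{eq:Test0.01}), yielding $\prod_{j<k}(\sigma_j-\sigma_{j+1})^{-2}\asymp k^{2k}$. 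You only counted one power.

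\textbf{The scale of Banach spaces must accommodate non-holomorphic data.} You propose using ``bounded holomorphic functions on a tubular neighborhood of $\Gamma_1$.'' This is fine for $A_j$ and $F_j$, but the $B_j$ (equivalently $\Bfun$, $\Top^k\Bfun_0$) are only real-analytic near $\Gamma_1$ and are manifestly not holomorphic --- the very equation \eqref{eq:dbar4.03} produces smooth, not holomorphic, output. A space of holomorphic functions on an annulus cannot hold them. The paper's device is to pass to \emph{polarizations}: $\Hsp^\infty_\sigma$ consists of $L^\infty$ functions $f^\diamond(z,w)$ holomorphic in $(z,\bar w)$ on the polarized neighborhood $\hat{\mathbb A}(\sigma)$, with $f^\diamond(z,z)=f(z)$ recovering the real-analytic function on the diagonal. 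All the operator bounds (\eqref{eq:dbarest6}, \eqref{eq:Lest.001}, \eqref{eq:Test0.01}) and the Banach-algebra structure live in that polarized setting. Without some such complexification your Banach scale is not even the right ambient space for the iteration, and one also loses the clean Cauchy estimate that drives the growth bound. (Secondarily: the paper collapses the coupled iteration into a single Neumann equation $\Bfun=\Bfun_0+m^{-1}\Top[\Bfun]$ for $\Bfun$ alone, which makes the growth analysis a genuine single-operator iteration; the H\"ormander step corrects only the cut-off defect $P^\approx\dbar\chi_{1,1}$, not the full soft-RH residual; and the final comparison with $P$ goes through the duality $\inf_{h\in(\mathrm{Pol}_{m-1})^\perp}\|\chi_{1,1}P^\approx-h\|_{mQ}$ plus a leading-coefficient argument to identify the minimizer as $\alpha P$ with $\alpha\approx1$, which is slightly more than ``$L^2$-minimality of $P$.'')
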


\begin{rem}
The algorithm also calculates the asymptotic expansion of the norm $\|P\|^2_{mQ}$
with an analogously small error term, as we may see from the relations \eqref{eq:orthog4}
and \eqref{eq:fundformula5}, together with the observation that we may replace $\chi_{0,1}$
in \eqref{eq:fundformula5} by its square without any changes in the estimates.
\end{rem}

\subsection{Organization of the paper}
In Section \ref{sec:prel}, we present useful background material: the Bernstein-Walsh lemma,
H\"ormander estimates for the $\dbar$-equation, as well as 
Szeg\H{o} projections and the exterior Herglotz operator.
Then in Section \ref{sec:algo}, we present the algorithm which produces the successive 
approximations, at first intuitively, and later with the inclusion of error terms that come from
an approximate solution of a partial differential equation for a function we denote by $\Bfun$.  
The algorithm dismisses what happens inside a fixed compact part of interior of
the droplet $\calS_1$, so to justify that, we apply the method of $C^\infty$-smooth cut-off 
functions in Section \ref{sec:cutoff}, and solve $\dbar$-equations with growth control to obtain
polynomials. The proof of the main theorem is also supplied in Section \ref{sec:cutoff}, but 
it requires certain estimates that are only developed in the following section, Section 
\ref{sec:AGA}. The method in Section \ref{sec:AGA} involves sequences of nested Banach
spaces of real-analytic functions in neighborhoods of the unit circle $\T$, and resembles the
Nishida-Nirenberg approach to the Cauchy-Kovalevskaya theorem. 

\subsection{Remarks on other related work}
Determinantal processes are used to model the repulsion of fermions. 
This applies to finite as well as infinite particle system, see, e.g., \cite{macchi}. The process 
is induced by a  correlation kernel, often a Bergman kernel, or a polynomial Bergman kernel.
These Bergman kernels enjoy local asymptotic expansions, following work of Tian, Zelditch,
and Catlin (see, e.g., \cite{Zelditch}).  A version which relies on microlocal analysis is supplied 
by Berman, Berndtsson,  and Sj\"ostrand \cite{BBS} (compare with \cite{charles}), while
a stronger expansion theorem with exponentially decaying error term was recently
obtained by Rouby, Sj\"ostrand, and V\~u Ng\d{o}c \cite{RSNg}. It should be mentioned that
while these earlier works are somewhat related to the theme of the present paper, the approaches 
are strictly local and cannot touch the nonlocal phenomena studied here. We also point out that
the literature on local Bergman kernel expansion is extensive and that we only supply a sample 
of the possible references.

A finite particle determinantal process may be thought of in terms of a corresponding Gibbs model 
with a specific thermodynamical inverse temperature $\beta$. As the temperature freezes to $0$, 
the determinantal property is lost, but we expect crystallization to energy minimizing Fekete point 
configurations. Here, we should mention the \emph{Abrikosov conjecture}, which maintains that 
the energetically optimal 
configuration is achieved in the many particle limit by the hexagonal lattice (see, e.g., \cite{serf};
a complete solution is available in the higher dimensional spaces $\R^8$ and $\R^{24}$, 
see \cite{CKMRV}). 
We also mention that a higher-dimensional complex manifold notion of Fekete points was 
considered by Berman, Boucksom, and Witt Nystr\"om \cite{BBWN}.

\subsection{Acknowledgements}
I should thank Aron Wennman for his interest in the approach described here.

\section{Preliminaries}
\label{sec:prel}

We need two basic estimates, the growth estimate which comes from a polynomial 
growth assumption, and an estimate of solutions to $\dbar$-problems with polynomial
growth, which comes from application of H\"ormander's $L^2$-method. We prefer
to formulate both results for a flexible parameter $\tau$ rather than just for $\tau=1$.  
We also introduce the Herglotz operator, which solves a boundary value problem.

\subsection{A weighted Bernstein-Walsh lemma}
We recall Proposition 2.2.2 in \cite{HW1}. The formulation needs the space 
$L^2_{mQ}(\Omega)$, which for an open subset $\Omega\subset\C$ 
is the standard $L^2$-space with finite norm
\[
\|f\|_{mQ,\Omega}^2:=\int_\Omega|f|^2\e^{-2mQ}\diffA<+\infty.
\]

\begin{prop}
Let $\tau=n/m$ be fixed with $0<\tau<1+\varepsilon_0$, and suppose that $\calK$ is a 
compact subset of $\calS_\tau$. Also, fix a positive real parameter $\delta$. 
Then there exists a positive constant $C$ such that 
for any $u\in L^2_{mQ,\C\setminus\calK}$ which is holomorphic on $\C\setminus\calK$
with polynomial growth control $u(z)=\Ordo(|z|^n)$ as $|z|\to+\infty$,  we have that
\[
|u(z)|\le C m^{\frac12}\|u\|_{mQ,\C\setminus\calK}\,\e^{m\check Q_\tau(z)},\qquad
\mathrm{dist}_\C(z,\calK)\ge\delta m^{-\frac12}.
\]
\label{prop:BW}
\end{prop}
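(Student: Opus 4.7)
The proof is a weighted Bernstein--Walsh-type estimate in two stages: a pointwise $L^\infty$ bound from the $L^2$ norm via the sub-mean-value property on small disks, and a maximum-principle refinement in the exterior of the droplet.

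\emph{Step 1 (local $L^\infty$--$L^2$ comparison).} For $z_0$ with $\mathrm{dist}_\C(z_0,\calK)\ge\delta m^{-\frac12}$ lying in a bounded neighborhood of $\calS_\tau$, set $\rho:=\tfrac12\delta m^{-\frac12}$, so that the disk $D:=D(z_0,\rho)$ is contained in $\C\setminus\calK$. Since $Q\in C^2$ with uniform bounds on a compact neighborhood of $\calS_\tau$, Taylor expansion to first order at $z_0$ splits $Q(z)=\re H(z)+E(z)$ on $D$, where $H(z):=Q(z_0)+2\partial Q(z_0)(z-z_0)$ is a holomorphic polynomial of degree one and $|E(z)|\le C|z-z_0|^2=\Ordo(m^{-1})$ on $D$. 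The function $v(z):=u(z)\e^{-mH(z)}$ is holomorphic on $D$, and $|v|^2=|u|^2\e^{-2mQ+2mE}$; the sub-mean-value inequality for the subharmonic function $|v|^2$, together with $m\sup_D|E|=\Ordo(1)$, yields
\[
|u(z_0)|^2\e^{-2mQ(z_0)}=|v(z_0)|^2\le \frac{C}{\rho^2}\,\|u\|_{mQ,\C\setminus\calK}^2\asymp m\,\|u\|_{mQ,\C\setminus\calK}^2,
\]
whence $|u(z_0)|\le C' m^{\frac12}\|u\|_{mQ,\C\setminus\calK}\,\e^{mQ(z_0)}$. This already gives the proposition on the droplet $\calS_\tau$, where $\check Q_\tau=Q$.

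\emph{Step 2 (maximum-principle refinement off the droplet).} For $z_0\in\C\setminus\calS_\tau$, we must sharpen the exponent from $Q$ to $\check Q_\tau$. Assuming $\calS_\tau$ is simply connected (the setting of the paper), introduce the exterior conformal map $\phi_\tau:\C\setminus\calS_\tau\to\D_\e$ and the bounded holomorphic function $\calQ_\tau$ with $\re\calQ_\tau=Q$ on $\Gamma_\tau$, so that $\check Q_\tau=\re\calQ_\tau+\log|\phi_\tau|$ on $\C\setminus\calS_\tau$, in analogy with \eqref{eq:Qbreverel}. Define
\[
g(z):=u(z)\,\phi_\tau(z)^{-n}\,\e^{-m\calQ_\tau(z)},\qquad z\in\C\setminus\calS_\tau,
\]
so that $|g|=|u|\,\e^{-m\check Q_\tau}$. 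The function $g$ is holomorphic on $\C\setminus\calS_\tau$, and the polynomial growth $u(z)=\Ordo(|z|^n)$ combined with $\phi_\tau(z)\asymp z$ at infinity shows that $g$ extends holomorphically (with a finite value) to the point at infinity on the Riemann sphere. The maximum principle on the compact Riemann-sphere domain $(\C\setminus\calS_\tau)\cup\{\infty\}$ then gives
\[
\sup_{\C\setminus\calS_\tau}|g|\le\sup_{\Gamma_\tau}|g|=\sup_{\Gamma_\tau}|u|\,\e^{-mQ}\le C'm^{\frac12}\|u\|_{mQ,\C\setminus\calK},
\]
by Step 1 applied at the points of $\Gamma_\tau$, which lie at a fixed positive distance from $\calK\subset\mathrm{int}\,\calS_\tau$. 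Rearranging, $|u(z_0)|\le C'm^{\frac12}\|u\|_{mQ,\C\setminus\calK}\,\e^{m\check Q_\tau(z_0)}$ on $\C\setminus\calS_\tau$, completing the proof.

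\emph{Main obstacle.} The delicate point in Step 2 is the verification that $g$ extends bounded-holomorphically across infinity; this relies on the exact matching of the polynomial growth exponent $n$ with the growth of $\phi_\tau^{-n}$ and on the boundedness of $\calQ_\tau$. The $C^2$-smoothness of $Q$ (rather than merely $C^1$) is essential in Step 1: it permits the absorption of the leading Taylor correction into the holomorphic prefactor $\e^{-mH}$, keeping the residual $m|E|=\Ordo(1)$, whereas a $C^1$-only bound would yield $m|E|=\Ordo(m^{\frac12})$, which would blow up the constant. Should $\calS_\tau$ fail to be simply connected, one either argues componentwise on $\C\setminus\calS_\tau$, or appeals directly to the extremal definition of $\check Q_\tau$ by verifying that $m^{-1}\log|u|$, suitably renormalized, gives a valid competitor in the obstacle problem.
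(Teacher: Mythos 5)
The paper states this as a recollection of Proposition 2.2.2 from \cite{HW1} and does not reprove it, so there is no internal proof to compare against; your two-step scheme (local sub-mean-value with holomorphic Taylor prefactor, then a maximum-principle promotion of $Q$ to $\check Q_\tau$ off the droplet) is the standard argument and is essentially correct.

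One formula is off: you wrote $\check Q_\tau=\re\calQ_\tau+\log|\phi_\tau|$, but since $\check Q_\tau(z)=\tau\log|z|+\Ordo(1)$ while $\log|\phi_\tau(z)|=\log|z|+\Ordo(1)$ and $\re\calQ_\tau$ is bounded, the correct relation is $\check Q_\tau=\re\calQ_\tau+\tau\log|\phi_\tau|$ (which specializes to \eqref{eq:Qbreverel} when $\tau=1$). Fortunately you then set $g:=u\,\phi_\tau^{-n}\e^{-m\calQ_\tau}$, and since $n=\tau m$ this gives exactly $|g|=|u|\,\e^{-m\check Q_\tau}$ under the \emph{corrected} identity, so the rest of Step 2 (boundedness of $g$ at infinity, maximum principle on the closed exterior in the Riemann sphere, feeding in the Step-1 bound along $\Gamma_\tau$, which sits at fixed distance $>\delta m^{-\frac12}$ from $\calK$) goes through unchanged. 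You should also note that Step 1 only requires uniform $C^2$-bounds on a fixed compact neighborhood of $\calS_\tau$, which suffices because Step 1 is invoked only on $\calS_\tau$ and on $\Gamma_\tau$, the far field being handled entirely by Step 2.

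On the multiply-connected caveat: the route you sketch at the end is indeed the cleaner and more general one, and is the one used in \cite{HW1}. Concretely, $q:=m^{-1}\log|u|-m^{-1}\log\big(Cm^{\frac12}\|u\|_{mQ,\C\setminus\calK}\big)$ is subharmonic on $\C\setminus\calK$, satisfies $q(z)\le\tau\log|z|+\Ordo(1)$ from the growth hypothesis, and by Step 1 satisfies $q\le Q=\check Q_\tau$ on a neighborhood of $\Gamma_\tau$. Since $\check Q_\tau$ is harmonic on $\C\setminus\calS_\tau$ with matching growth $\tau\log|z|+\Ordo(1)$, the difference $q-\check Q_\tau$ is subharmonic there, bounded above, and $\le0$ on $\Gamma_\tau$, so the Phragm\'en--Lindel\"of maximum principle gives $q\le\check Q_\tau$ on $\C\setminus\calS_\tau$ with no topological assumptions on $\calS_\tau$ and no need for $\phi_\tau$ or $\calQ_\tau$ (which the paper only constructs at $\tau=1$ under the hypotheses of Subsection \ref{ss:further}).
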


\subsection{H\"ormander's estimate for polynomial $\dbar$-problems}
As our method relies on neglecting the behavior of the function we are looking for 
in certain regions of the plane, it is important to be able to reconstruct a globally defined
function. We do this by solving the appropriate $\dbar$-problem, while appealing to the
following basic estimate, which follows from Proposition 2.4.1 in \cite{HW1}.

\begin{prop}
Let $\tau=n/m$ be fixed with $0<\tau<1+\varepsilon_0$, and suppose that $f\in L^\infty(\C)$
vanishes off $\calS_\tau$.
Then there exists a solution $u$ to $\dbar u=f$ with $u(z)=\Ordo(|z|^{n-1})$ as $|z|\to+\infty$
with
 \[
\int_\C|u|^2\e^{-2mQ}\diffA\le  \frac{1}{2m}\int_{\calS_\tau}|f|^2\e^{-2mQ}\frac{\diffA}{\hDelta Q},
\]
provided that the right-hand side is finite.
\label{prop:Horm}
\end{prop}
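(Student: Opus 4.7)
The plan is to apply H\"ormander's $L^2$ estimate for the $\dbar$-equation not with the naive weight $2mQ$, but with the modified weight $2m\check Q_\tau$, which is tailor-made for this problem. The two relevant features of $\check Q_\tau$ (recalled from Subsection 1.2) are: it is globally subharmonic, with $\hDelta\check Q_\tau=1_{\calS_\tau}\hDelta Q$ in the distributional sense; and it is dominated by $Q$ everywhere, with equality on the droplet $\calS_\tau$. Since by hypothesis $f$ is supported in $\calS_\tau$, both features dovetail with the form of the right-hand side in the statement.

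The core step is the following. Invoke the standard $L^2$-existence theorem for $\dbar$ with the $C^{1,1}$, subharmonic weight $\phi:=2m\check Q_\tau$: there exists a solution $u$ to $\dbar u=f$ satisfying
\[
\int_\C|u|^2\e^{-2m\check Q_\tau}\diffA\le \frac{1}{2m}\int_\C|f|^2\frac{\e^{-2m\check Q_\tau}}{\hDelta\check Q_\tau}\diffA.
\]
On $\calS_\tau$ one has $\check Q_\tau=Q$ and $\hDelta\check Q_\tau=\hDelta Q>0$ (positivity coming from the structure of the obstacle problem and the hypothesis on $Q$), so the right-hand side collapses to
\[
\frac{1}{2m}\int_{\calS_\tau}|f|^2\e^{-2mQ}\frac{\diffA}{\hDelta Q},
\]
which is precisely the asserted bound. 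For the left-hand side, the globally valid pointwise inequality $\check Q_\tau\le Q$ yields $\e^{-2mQ}\le\e^{-2m\check Q_\tau}$, so the left-hand side dominates $\int_\C|u|^2\e^{-2mQ}\diffA$, as wanted.

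For the growth control at infinity, use that $\check Q_\tau\in\mathrm{Subh}_\tau(\C)$, so $\check Q_\tau(z)=\tau\log|z|+\Ordo(1)$. Hence the weight behaves like $|z|^{-2n}$ for large $|z|$, up to a bounded factor. Since $f$ vanishes off the compact set $\calS_\tau$, the solution $u$ is holomorphic in a neighborhood of $\infty$, and admits a Laurent expansion $u(z)=\sum_{k\in\Z}a_kz^k$ there. A term-by-term orthogonality computation in polar coordinates shows that $\int_{|z|>R}|u|^2|z|^{-2n}\diffA<+\infty$ forces $a_k=0$ for every $k\ge n-1$, so in fact $u(z)=\Ordo(|z|^{n-2})$, which is stronger than the stated $\Ordo(|z|^{n-1})$.

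The main technical obstacle is that $\check Q_\tau$ is only $C^{1,1}$, whereas classical formulations of H\"ormander's estimate assume a $C^2$ weight. The fix is routine but needs to be spelled out: either invoke a version of the theorem admitting subharmonic weights whose distributional Laplacian lies in $L^\infty_{\mathrm{loc}}$, or regularize $\check Q_\tau$ by convolution with a smooth bump, apply the smooth version of the estimate, and extract a weakly convergent subsequence in the relevant weighted $L^2$ space as the regularization parameter tends to zero. Fatou's lemma preserves the upper bound in the limit, and one verifies $\dbar u=f$ in the distributional sense along the way. This is the step where one leans most heavily on the obstacle-problem regularity theory quoted in Subsection 1.2.
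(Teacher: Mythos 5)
Your proof is correct and follows the same route as Proposition 2.4.1 of \cite{HW1}, to which the paper defers: apply H\"ormander's $L^2$-estimate with the subharmonic weight $2m\check Q_\tau$ in place of $2mQ$, then use $\check Q_\tau\le Q$ with equality on $\calS_\tau$, $\hDelta\check Q_\tau=1_{\calS_\tau}\hDelta Q$, and the growth $\check Q_\tau(z)=\tau\log|z|+\Ordo(1)$ to obtain the weighted norm bound and the polynomial decay at infinity, respectively. Your remark on regularity is well placed but not a serious obstruction, since H\"ormander's theorem applies to merely plurisubharmonic weights and the distributional Laplacian $1_{\calS_\tau}\hDelta Q$ is in $L^\infty_{\mathrm{loc}}$, so the $C^{1,1}$ smoothness of $\check Q_\tau$ suffices.
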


\subsection{Projections and the Herglotz operator}
\label{ss:Herglotz}
We shall need the Szeg\H{o} projection
\[
\Pop_{H^2}f(z):=\int_\T \frac{f(\zeta)}{1-z\bar\zeta}\diffs(\zeta), \qquad z\in\D.
\]
It is the orthogonal projection $L^2(\T)\to H^2$, where $H^2$ is the standard Hardy space 
in the unit disk, which may be understood both as a space of holomorphic functions in the disk
$\D$ and as a subspace of $L^2(\T)$ via nontangential boundary values. For the
details, see any book on Hardy spaces, e.g., \cite{Duren}, \cite{Garnett}, \cite{Koosis}.
We also need the orthogonal projections to the Hardy subspaces $H^2_0$, $H^2_-$, and 
$H^2_{-,0}$, denoted by $\Pop_{H^2_0}$, $\Pop_{H^2_-}$, and $\Pop_{H^2_{-,0}}$. 
Here, the space $H^2_0$ is the codimension $1$ subspace of $H^2$ consisting of all
functions that vanish at the origin. The space $H^2_-$ may be thought of as a space of 
conjugate-holomorphic functions in the disk, or, alternatively, by
Schwarz reflection, as a space of holomorphic functions in the exterior disk $\D_\e$. Then
$f\in H^2_-$ if and only if the reflected function $\overline{f(1/\bar z)}$ is in $H^2$. Moreover,
the space $H^2_{-,0}$ has codimension $1$ in $H^2_-$, and consists of all functions in 
$H^2_-$ that vanish at the point at infinity. The space $L^2(\T)$ splits orthogonally as 
a direct sum in two ways:
\[
L^2(\T)=H^2_0\oplus H^2_-=H^2\oplus H^2_{-,0}.
\]
The corresponding exterior Szeg\H{o} projections $\Pop_{H^2_-}$ and $\Pop_{H^2_{-,0}}$ may 
be expressed as integrals as well:
\[
\Pop_{H^2_-}f(z)=z\int_\T \frac{f(\zeta)}{z-\zeta}\diffs(\zeta), \qquad 
\Pop_{H^2_{-,0}}f(z)=\int_\T \frac{\zeta f(\zeta)}{z-\zeta}\diffs(\zeta), 
\]
for $z\in\D_\e$. Analogously, we find that
\[
\Pop_{H^2_0}f(z)=z\int_\T \frac{\bar\zeta f(\zeta)}{1-z\bar\zeta}\diffs(\zeta), \qquad z\in\D.
\]
We shall also be interested in the exterior \emph{Herglotz transform}
\[
\Hop_{\D_\e}f(z):=\int_\T \frac{z+\zeta}{z-\zeta}f(\zeta)\diffs (\zeta),\qquad z\in\D_\e.
\] 
Since
\[
\frac{z+\zeta}{z-\zeta}=\frac{2\zeta}{z-\zeta}+1,
\]
it follows that 
\[
\Hop_{\D_\e}f=\Pop_{H^2_-}f+\Pop_{H^2_{-,0}}f=2\Pop_{H^2_{-,0}}+\langle f\rangle_\T,
\]
where we introduce the notation for the mean value on the unit circle,
\begin{equation}
\langle f\rangle_\T:=\int_\T f(\zeta)\diffs(\zeta).
\label{eq:meandef}
\end{equation}
If $f$ is a real-valued function on the circle, say in $L^2(\T)$, then the Herglotz transform 
obtains a solution $u=\Hop_{\D_\e}f$ to the problem $\re\, u=f$ on the circle $\T$, with 
$u$ holomorphic in
$\D_\e$. As such, it is uniquely determined by the requirements that $u\in H^2_-$ and 
$\im\,u(\infty)=0$. 

We may use the Herglotz operator (or the exterior Szeg\H{o} projection) to solve jump
problems. The version we need is Proposition 2.5.1 in \cite{HW1}, formulated below.

\begin{prop}
Suppose that $\vartheta\in L^\infty(\T)$ is of the form $\vartheta=\e^{u+\bar v}$, where
$u,v\in H^\infty$, and let $G$ be a function in $L^2(\T)$. Then $f$ satisfies
\[
f\in H^2_-\cap \vartheta^{-1}(G+H^2)
\]
if and only if
\[
f=C\,\e^{-\bar v}+\e^{-\bar v}\Pop_{H^2_{-,0}}(\e^{-u}G)
\]
for some constant $C$.
\label{prop:Herglotz}
\end{prop}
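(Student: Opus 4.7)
The plan is to reduce everything to the orthogonal splitting $L^2(\T)=H^2\oplus H^2_{-,0}$ by peeling off the two factors $\e^u$ and $\e^{\bar v}$ of $\vartheta$ one at a time. Since $u,v\in H^\infty$ are bounded, the exponentials $\e^{\pm u}$ lie in $H^\infty$ and $\e^{\pm\bar v}$ lie in $\overline{H^\infty}$; in particular all four functions are bounded above and away from zero on $\T$. Consequently multiplication by $\e^{\pm u}$ is a bounded invertible operation on $H^2$, while multiplication by $\e^{\pm\bar v}$ is a bounded invertible operation on $H^2_-$ (as $\overline{H^\infty}$ has non-positive Fourier frequencies, it preserves the class of functions with non-positive Fourier frequencies).

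For the forward implication, I start from $\vartheta f-G\in H^2$ and multiply by $\e^{-u}\in H^\infty$ to obtain the equivalent condition
\[
\e^{\bar v}f-\e^{-u}G\in H^2.
\]
Since $\e^{\bar v}\in\overline{H^\infty}$ and $f\in H^2_-$ both have only non-positive Fourier coefficients, their product $\e^{\bar v}f$ lies in $H^2_-$. Using the splitting $L^2(\T)=H^2\oplus H^2_{-,0}$ I write $\e^{-u}G=\Pop_{H^2}(\e^{-u}G)+\Pop_{H^2_{-,0}}(\e^{-u}G)$, so the condition becomes
\[
\e^{\bar v}f-\Pop_{H^2_{-,0}}(\e^{-u}G)\in H^2.
\]
The left-hand side also lies in $H^2_-$, and $H^2\cap H^2_-$ reduces to the constants (their Fourier supports intersect only at frequency zero). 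Hence this difference equals some constant $C\in\C$, and dividing by $\e^{\bar v}$ yields the asserted formula.

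For the converse, I substitute the formula back and compute
\[
\vartheta f=\e^{u+\bar v}\big(C\e^{-\bar v}+\e^{-\bar v}\Pop_{H^2_{-,0}}(\e^{-u}G)\big)=C\e^u+\e^u\Pop_{H^2_{-,0}}(\e^{-u}G).
\]
Writing $G=\e^u\Pop_{H^2}(\e^{-u}G)+\e^u\Pop_{H^2_{-,0}}(\e^{-u}G)$ and subtracting gives $\vartheta f-G=\e^u\bigl(C-\Pop_{H^2}(\e^{-u}G)\bigr)\in H^\infty\cdot H^2\subseteq H^2$, while $f\in H^2_-$ is immediate since both $C\e^{-\bar v}$ and $\e^{-\bar v}\Pop_{H^2_{-,0}}(\e^{-u}G)$ are products of functions with only non-positive Fourier frequencies. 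The only subtle point worth highlighting is that multiplication by $\e^{\pm u}$ must be an \emph{equivalence} of conditions ``$(\cdot)\in H^2$'', not merely a one-way implication; this is guaranteed by the bilateral boundedness of $\e^{\pm u}$ on $\T$. Otherwise the argument is essentially a two-line Fourier computation once the orthogonal splitting and the triviality of $H^2\cap H^2_-$ are identified as the driving mechanisms.
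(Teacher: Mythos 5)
Your proof is correct. Since the paper simply cites this as Proposition 2.5.1 of \cite{HW1} without reproducing the argument, there is no in-text proof to compare against; the route you take (strip the $\e^{u}$ factor by invertible multiplication in $H^{\infty}$, use that $\e^{\bar v}\cdot H^2_- \subset H^2_-$, decompose $\e^{-u}G$ along $L^2(\T)=H^2\oplus H^2_{-,0}$, and invoke $H^2\cap H^2_-=\C$) is precisely the standard Wiener--Hopf-type factorization argument one expects, and your flagged subtlety about the two-sided invertibility of multiplication by $\e^{\pm u}$ is exactly the point that makes the reduction an equivalence rather than a one-way implication.
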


At times we shall also need the harmonic extension to the exterior disk $\D_\e$ given by
the Poisson integral 
\begin{equation}
\Uop_{\D_\e}f(z):=(|z|^2-1)\int_\T \frac{f(\zeta)}{|z-\zeta|^2}\diffs(\zeta),\qquad z\in\D_\e.
\end{equation}
If $f$ is continuous, then $\Uop_{\D_\e} f$ extends continuously to $\bar\D_\e$, while
if $f$ is real-analytic, then $\Uop_{\D_\e} f$ extends harmonically across $\T$.

\section{Presentation of the the algorithm}
\label{sec:algo}

We first use the conformal mapping $\phi=\phi_1:\C\setminus\calS_1\to\D_\e$
and its inverse $\varphi=\varphi_1=\phi_1^{-1}:\D_\e\to\C\setminus\calS_1$ to turn
our interface loop $\Gamma_1=\partial\calS_1$ into the unit circle $\T$.
Since $\varphi$ remains conformal on a slightly bigger external disk 
\[
\D_\e(0,\rho):=\{z\in\C:\,|z|>\rho\}
\]
with $0<\rho<1$, this means that we discard what happens inside some compact 
subset $\calK$ contained in the interior of the droplet $\calS_1$ (we may think of
$\calK$ as the complement of $\varphi(\D_\e(0,\varrho))$. Later, we justify this
by using cut-off functions to get a globally defined function, and then we apply
H\"ormander's $\dbar$-estimate to get a polynomial solution which is close to the
function we started with.

\subsection{Transfer to the unit circle and the idea of the algorithm}
We write $\varphi=\varphi_1:=\phi_1^{-1}:\D_\e\to\C\setminus\calS_1$
for the indicated conformal mapping, tacitly extended across $\T$, and
consider
\begin{equation*}
\Rfun:=R\circ\varphi,\quad \hat\Rfun:=\hat{R}\circ\varphi,\quad \Psi:=
\Psiit\circ\varphi,
\end{equation*}
and the associated functions
\begin{equation*}
\Afun:=A\circ\varphi,\quad
\Bfun:=B\circ\varphi,\quad \Ffun:=\varphi'F\circ\varphi.  
\end{equation*}
Here, $\Afun$ and $\Ffun$ are holomorphic functions in a neighborhood
of $\bar\D_\e$ with asymptotics in accordance with \eqref{eq:asympFA}:
\begin{equation}
\Ffun(z)=1+\Ordo(|z|^{-1}),\quad
\Afun(z)=\Ordo(|z|^{-1}),
\label{eq:asympFA2}
\end{equation}
as $|z|\to+\infty$. In particular, $\Ffun$ is bounded with a positive
value at infinity, while $A$ is bounded and vanishes at infinity. 
In terms of these functions, the equation \eqref{eq:dbar3.02} reads
\begin{equation}
\Afun\dbar\hat \Rfun-2
\Bfun\hat \Rfun\,\dbar \hat \Rfun+\tfrac12 m^{-1}\dbar \Bfun
=(\pi/2)^{\frac12}\bar \Ffun.
\label{eq:dbar4.01}
\end{equation}
Just as before, we split the equation in two separate steps:
\begin{equation}
\Afun\dbar\hat \Rfun
+\tfrac12 m^{-1}\dbar \Bfun=(\pi/2)^{\frac12}\bar \Ffun
\quad\text{on}\quad\T,
\label{eq:dbar4.02}
\end{equation}
and
\begin{equation}
\Bfun=\frac{\Afun\dbar\hat \Rfun
+\tfrac12 m^{-1}\dbar \Bfun-(\pi/2)^{\frac12}\bar \Ffun}
{2\hat \Rfun\,\dbar \hat \Rfun}.
\label{eq:dbar4.03}
\end{equation}
We will refer to these as {\sc steps I} and {\sc II}, which 
we analyze below in some detail. 
Local analysis around the circle $\T$ shows that
\begin{equation}
\dbar\hat \Rfun=2^{-\frac12}(\varDelta \Rfun)^{\frac12}\zeta
\quad\text{on}
\,\,\,\T.
\label{eq:dbarV}
\end{equation}
Here and in the sequel, $\zeta$ stands for the coordinate function 
$\zeta(z)=z$.
We recall the notation $\Hop_{\D_\e}$ for the exterior Herglotz operator
from Subsection \ref{ss:Herglotz}.
We introduce the function $\Hfun_\Rfun$ given by
\[
\Hfun_\Rfun:=\pi^{-\frac14}\exp\big(\tfrac14
\Hop_{\D_\e}[\log\hDelta \Rfun]\big),
\]
which is a bounded (and bounded away from $0$) holomorphic
function in $\D_\e$ with holomorphic extension across $\T$.
Then
$|\Hfun_\Rfun|^2=\pi^{-\frac12}(\hDelta \Rfun)^{\frac12}$
holds on $\T$, and the value at infinity equals
\[
\Hfun_\Rfun(\infty)=\pi^{-\frac14}
\exp(\tfrac14\langle\log\hDelta \Rfun\rangle_\T)>0.
\] 
It now follows from \eqref{eq:dbarV} that
\begin{equation}
\dbar\hat \Rfun=2^{-\frac12}
\zeta(\varDelta \Rfun)^{\frac12}=
(\pi/2)^{\frac12}\zeta |\Hfun_\Rfun|^2
\quad\text{on}\,\,\,\T.
\label{eq:dbarV2}
\end{equation}
\smallskip

\noindent {\sc Step I}: In view of \eqref{eq:dbarV2}, the equation
\eqref{eq:dbar4.02} may be written as
\begin{equation}
\frac{\Ffun}{\Hfun_\Rfun}=
\overline{\zeta\Afun\Hfun_\Rfun}+
(2\pi)^{-\frac12}m^{-1}\frac{\partial\bar\Bfun}{\Hfun_\Rfun}
\quad\text{on}\,\,\,\T.
\label{eq:dbar10}
\end{equation}
From the data \eqref{eq:asympFA}, we see that in \eqref{eq:dbar10}, 
${\Ffun}/{\Hfun_\Rfun}$ s bounded and holomorphic in
$\bar\D_\e$, so that ${\Ffun}/{\Hfun_\Rfun}\in H^2$,
whereas $\overline{\zeta \Afun \Hfun_\Rfun}$
extends by Schwarz reflection to a bounded 
holomorphic function on $\bar\D$, so that 
$\overline{\zeta \Afun \Hfun_\Rfun}\in H^2_-$. This means that 
\eqref{eq:dbar10} is a Riemann-Hilbert problem with jump 
$(2\pi)^{-\frac12}m^{-1}{\partial\bar\Bfun}/{\Hfun_\Rfun}$, a situation 
covered by Proposition \ref{prop:Herglotz}.
By that result, we may solve for $\Ffun/\Hfun_\Rfun$, 
\begin{equation}
\frac{\Ffun}{\Hfun_\Rfun}=
\apar_1
+(2\pi)^{-\frac12}m^{-1}\Pop_{H^2_{-,0}}\bigg[\frac{\partial\bar \Bfun}{\Hfun_\Rfun}\bigg],
\label{eq:solF0}
\end{equation}
where $\apar_1$ is a constant. 
At the same time,
\begin{equation}
\zeta \Afun\Hfun_\Rfun=
\bar\apar_1
-(2\pi)^{-\frac12}m^{-1}\Pop_{H^2_{-}}\bigg
[\frac{\dbar\Bfun}{\bar \Hfun_\Rfun}\bigg],
\label{eq:solA0}
\end{equation}
while the constant $\apar_1$ is given by 
\begin{equation}
\apar_1=\bigg\langle\frac{\Ffun}{\Hfun_\Rfun}\bigg\rangle_\R=
\frac{\Ffun(\infty)}{\Hfun_\Rfun(\infty)}=\frac{1}{\Hfun_\Rfun(\infty)}
=\pi^{\frac14}
\exp\big(-\tfrac14\langle\log\hDelta \Rfun\rangle_\T\big)>0,
\label{eq:solF0.02}
\end{equation}
since $\Ffun(\infty)=1$ is our assumed normalization and $\Hfun_\Rfun(\infty)$ is
known.
This ends our analysis of {\sc step I}.

\smallskip

\noindent {\sc Step II}: We turn to the analysis of the next step,
based on the the division formula \eqref{eq:dbar4.03}.
To this end, we apply first \eqref{eq:dbarV2}, to see that we may write
\begin{equation}
\dbar\hat \Rfun=(\pi/2)^{\frac12}\zeta|\Hfun_\Rfun|^2
+2W_\Rfun\hat \Rfun \dbar\hat\Rfun,
\label{eq:dbarRdecomp}
\end{equation}
where the expression $W_\Rfun$ is real-analytic near $\T$.
We recall that $\Uop_{\D_\e}$ stands for the harmonic extension to $\D_\e$ of the
restriction to $\T$ of a given smooth function.
It is an immediate consequence of \eqref{eq:dbar10} that
\begin{equation}
\zeta\Afun\Hfun_\Rfun=\Uop_{\D_\e}[\zeta\Afun\Hfun_\Rfun]
=\frac{\bar \Ffun}{\bar \Hfun_\Rfun}-
(2\pi)^{-\frac12}m^{-1}\Uop_{\D_\e}\bigg[
\frac{\dbar\Bfun}{\bar\Hfun_\Rfun}\bigg]
\label{eq:dbar10.1}
\end{equation}
holds in a neighborhood of the closed exterior disk $\bar\D_\e$, in the
sense that each term extends harmonically across $\T$. 
It now follows from \eqref{eq:dbarRdecomp} and \eqref{eq:dbar10.1}
that
\begin{equation*}
\Afun\dbar \hat\Rfun+\tfrac12 m^{-1}\dbar \Bfun
-(\pi/2)^{\frac12}\bar \Ffun
=\tfrac12 m^{-1}\big(\dbar \Bfun-\bar \Hfun_\Rfun
\Uop_{\D_\e}(\dbar \Bfun/\bar \Hfun_\Rfun)\big)
+2\Afun \hat\Rfun\dbar\hat\Rfun\, W_\Rfun,
\end{equation*}
so that \eqref{eq:dbar4.03} asserts that
\begin{equation}
\Bfun=\tfrac12 m^{-1}\frac{\dbar\Bfun-\bar \Hfun_\Rfun
\Uop_{\D_\e}(\dbar \Bfun/\bar \Hfun_\Rfun)}
{2\hat\Rfun\dbar \hat\Rfun}
+\Afun W_\Rfun.
\label{eq:Bsolution0}
\end{equation}

\subsection{The equation for the function $\Bfun$ alone}

In view of the expression for the solution to the
Riemann-Hilbert problem \eqref{eq:solA0} from {\sc Step I}
and the relation \eqref{eq:solF0.02},  
the expression for $\Bfun$ may be written as
\begin{equation*}
\Bfun
=\tfrac12 m^{-1}\frac{\dbar \Bfun-\bar \Hfun_\Rfun
\Uop_{\D_\e}(\dbar \Bfun/\bar \Hfun_\Rfun)}
{2\hat\Rfun\dbar \Rfun}
+\frac{W_\Rfun}{\zeta \Hfun_\Rfun}\bigg(
\apar_1-(2\pi)^{-\frac12}m^{-1}\Pop_{H^2_{-}}\bigg
[\frac{\dbar \Bfun}{\bar \Hfun_\Rfun}\bigg]\bigg),
\end{equation*}
since $\apar_1>0$, which we simplify to
\begin{equation}
\Bfun=\apar_1\frac{W_\Rfun}{\zeta \Hfun_\Rfun}
+m^{-1}\bigg(\frac{\dbar \Bfun-\bar \Hfun_\Rfun
\Uop_{\D_\e}[\dbar \Bfun/\bar \Hfun_\Rfun]}
{4\hat\Rfun\dbar \hat\Rfun}
-(2\pi)^{-\frac12}\frac{W_\Rfun}{\zeta\Hfun_\Rfun}
\Pop_{H^2_{-}}\bigg
[\frac{\dbar \Bfun}{\bar \Hfun_\Rfun}\bigg]\bigg).
\label{eq:Bequation1}
\end{equation}
Note that in \eqref{eq:Bequation1}, we have merged
{\sc steps I} and II in a single equation which only
concerns the function $\Bfun$ (the function $\Afun$
no longer appears). 
This is an operator equation of the type
\begin{equation}
\Bfun=\Bfun_{0}+m^{-1}\Top[\Bfun],\quad \text{where}\quad
\Bfun_0:=\apar_1\frac{W_\Rfun}{\zeta \Hfun_\Rfun}
\quad\text{and}\quad\Top[\Bfun]
:=\Lop[\dbar \Bfun/\bar\Hfun_\Rfun],
\label{eq:NN1}
\end{equation}
and the operator $\Lop$ is given by
\begin{equation}
\Lop[f]:=\bar\Hfun_\Rfun\frac{f-\Uop_{\D_\e}[f]}
{4\hat\Rfun\dbar\hat\Rfun}-(2\pi)^{-\frac12}
\frac{W_\Rfun}{\zeta\Hfun_\Rfun}\Pop_{H^2_-}[f].
\label{eq:Lopdef1.001}
\end{equation}
We may attempt to solve the equation \eqref{eq:NN1} by
iteration, that is, by finite Neumann series approximation. 

\subsection{Quantitative work: approximate solutions $\Afun^\approx,
\Bfun^\approx,\Ffun^\approx$}
In the preceding subsection, we were a little naive, believing that
the equation \eqref{eq:NN1} would have an exact solution.
As for now, we only expect the equation \eqref{eq:NN1} to be 
approximately solvable.
So, we suppose we have found an approximate solution $\Bfun^\approx$,
with
\begin{equation}
\Bfun^\approx=\Bfun_0+m^{-1}\Top[\Bfun^\approx]+\calE_m.
\label{eq:approxB}
\end{equation}
where $\calE_m$ measures the error in solving the equation \eqref{eq:NN1}. 
Later on, in Section \ref{sec:AGA}, we will show that there exists a $\Bfun^\approx$
such $\calE_m$ decays rapidly as $m\to+\infty$.
Naturally, $\Bfun^\approx$ also depends on $m$, but is more important that the
error term reflects this. 
Based on our previously derived equations \eqref{eq:solF0} and \eqref{eq:solA0},  
we put accordingly
\begin{equation}
\Afun^\approx:=\frac{\apar_1}{\zeta\Hfun_\Rfun}-m^{-1}
\frac{(2\pi)^{-\frac12}}{\zeta\Hfun_\Rfun}
\Pop_{H^2_-}\bigg[\frac{\dbar \Bfun^\approx}{\bar\Hfun_\Rfun}\bigg]
\label{eq:Abeta1}                                                                                
\end{equation}
and 
\begin{equation}
\Ffun^\approx:=\apar_1\Hfun_\Rfun
+(2\pi)^{-\frac12}m^{-1}\Hfun_\Rfun\Pop_{H^2_{-,0}}\bigg
[\frac{\partial\bar \Bfun^\approx}{\Hfun_\Rfun}\bigg].
\label{eq:solF0.11}
\end{equation}
In particular, it follows that $\Ffun^\approx$ is properly normalized at infinity,
as
\[
\Ffun^\approx (\infty)=\apar_1\Hfun_\Rfun(\infty)=1,
\]
in view of \eqref{eq:solF0.02}.
It follows from the relations \eqref{eq:Abeta1} and \eqref{eq:solF0.11}
as well as \eqref{eq:solF0.02} that 
\begin{equation}
\frac{\bar\Ffun^\approx}{\bar\Hfun_\Rfun}
-\zeta\Hfun_\Rfun\Afun^\approx=(2\pi)^{-\frac12}m^{-1}\Uop_{\D_\e}
\bigg[\frac{\dbar\Bfun^\approx}{\bar\Hfun_\Rfun} \bigg],
\label{eq:Laurent}
\end{equation}
so that 
\begin{equation}
\zeta|\Hfun_\Rfun|^2 \Afun^\approx=
\bar\Ffun^\approx
-(2\pi)^{-\frac12}m^{-1}\bar\Hfun_\Rfun\Uop_{\D_\e}
\bigg[\frac{\dbar\Bfun^\approx}{\bar\Hfun_\Rfun} \bigg].
\label{eq:Laurent2}
\end{equation}
We see from this and \eqref{eq:dbarRdecomp} that
\begin{multline*}
\Afun^\approx\dbar\hat\Rfun=
(\pi/2)^{\frac12}\zeta|\Hfun_\Rfun|^2 \Afun^\approx
+2 W_\Rfun \hat\Rfun\dbar\hat\Rfun\,\Afun^\approx
\\
=
(\pi/2)^{\frac12}\bar\Ffun^\approx
-\tfrac12 m^{-1}\bar\Hfun_\Rfun\Uop_{\D_\e}
\bigg[\frac{\dbar\Bfun^\approx}{\bar\Hfun_\Rfun} \bigg]
+2 W_\Rfun \hat\Rfun\dbar\hat\Rfun\,\Afun^\approx,
\end{multline*}
so that
\begin{multline*}
\Afun^\approx\dbar\hat\Rfun+\tfrac12 m^{-1}\dbar\Bfun^\approx
\\
=(\pi/2)^{\frac12}\bar\Ffun^\approx
+\tfrac12 m^{-1}\bigg(\dbar\Bfun^\approx-
\bar\Hfun_\Rfun\Uop_{\D_\e}
\bigg[\frac{\dbar\Bfun^\approx}{\bar\Hfun_\Rfun} \bigg]\bigg)
+2 W_\Rfun \hat\Rfun\dbar\hat\Rfun\,\Afun^\approx.
\end{multline*}
Finally, if we apply the relation \eqref{eq:approxB} which defines 
the error $\calE_m$, we find that
\begin{multline}
\Afun^\approx\dbar\hat\Rfun+\tfrac12 m^{-1}\dbar\Bfun^\approx
-\Bfun^\approx\dbar\Rfun
\\
=(\pi/2)^{\frac12}\bar\Ffun^\approx
+\tfrac12 m^{-1}\bigg(\dbar\Bfun^\approx-
\bar\Hfun_\Rfun\Uop_{\D_\e}
\bigg[\frac{\dbar\Bfun^\approx}{\bar\Hfun_\Rfun} \bigg]\bigg)
\\
+2 W_\Rfun \hat\Rfun\dbar\hat\Rfun\,\Afun^\approx-\Bfun_0\dbar\Rfun
-m^{-1}(\dbar\Rfun)\Top[\Bfun^\approx]-\calE_m\dbar\Rfun.
\label{eq:Laurent5}
\end{multline}
Since $\dbar\Rfun=2\hat\Rfun\dbar\hat\Rfun$, we read off the
definition of $\Top$ that
\begin{equation*}
(\dbar\Rfun)\Top[\Bfun^\approx]=\frac12\bigg(\dbar\Bfun^\approx
-\bar\Hfun_\Rfun \Uop_{\D_\e}\bigg[\frac{\dbar\Bfun^\approx}
{\bar\Hfun_\Rfun}\bigg]\bigg)
-(2\pi)^{-\frac12}\frac{W_\Rfun\dbar R}{\zeta\Hfun_\Rfun}
\Pop_{H^2_-}\bigg[\frac{\dbar\Bfun^\approx}{\bar\Hfun_\Rfun}\bigg],
\end{equation*}
so that \eqref{eq:Laurent5} simplifies to
\begin{multline}
\Afun^\approx\dbar\hat\Rfun+\tfrac12 m^{-1}\dbar\Bfun^\approx
-\Bfun^\approx\dbar\Rfun
\\
=(\pi/2)^{\frac12}\bar\Ffun^\approx
+(2\pi)^{-\frac12}m^{-1}\frac{W_\Rfun\dbar R}{\zeta\Hfun_\Rfun}
\Pop_{H^2_-}\bigg[\frac{\dbar\Bfun^\approx}{\bar\Hfun_\Rfun}\bigg]
+W_\Rfun \dbar\Rfun\,\Afun^\approx-\Bfun_0\dbar\Rfun
-\calE_m\dbar\Rfun
\\
=(\pi/2)^{\frac12}\bar\Ffun^\approx
-\calE_m\dbar\Rfun,
\label{eq:Laurent6}
\end{multline}
where in the last step we used the definitions of $\Bfun_0$ and $\Afun^\approx$.


\section{The usefulness of cut-off functions}
\label{sec:cutoff}


The method outlined in the previous subsection only supplies a tentative 
solution to the soft Riemann-Hilbert problem of \eqref{eq:SRH1.Q} and 
\eqref{eq:SRH2.Q}.  So far, however, we have not even been able to assert
much rigorously. For instance, it is not clear that the approximate solution 
$Y=(P\,,\,\,\,\Psiit)$ supplied in terms of the triple 
$(\Afun^\approx,\Bfun^\approx,\Ffun^\approx)$ is even close to the actual solution.
We will explain why the function $\Ffun^\approx$ defines a good 
approximation $P^\approx$ of the orthogonal polynomial $P$.
 
 \subsection{Cut-off functions and a modification of the potential $\hat R$}  
 Let $\chi_0$ be a \emph{radial} cut-off function, which is $C^\infty$-smooth in
the plane with $0\le\chi_0\le1$ globally: $\chi_0=1$ on a fixed (closed) annular 
neighborhood $\calA_1$ of $\T$, while $\chi_0=0$ off a slightly larger fixed open 
annular neighborhood $\calA_0$.  We ask of $\chi_0$ furthermore that
$|\dbar\chi_0|^2\le C_1\chi_0$ holds for some large positive constant $C_1$.
The closure $\clos\calA_0$ is the support of $\chi_0$, and 
$\calA_1\subset\calA_0\subset\clos\calA_0$.
We let $\Vfun\ge0$ be a smooth potential in the plane such that 
$\Vfun=\hat\Rfun$ holds on $\clos\calA_0$, the support of  $\chi_0$, and that $V=0$
holds only along the circle $\T$, and $\Vfun$ is Lipschitz continuous in the plane, with
minimal growth $\Vfun(z)\ge \log|z|$ for $|z|\gg1$.
It is not difficult to see that
\begin{equation}
\int_\C(1-\chi_0)|\dbar\Vfun|\,\e^{-2m\Vfun^2+m\Vfun^2_-}\diffA\le C_2
m^{-1}\e^{-\varepsilon_1 m},
\label{eq:Vest1}
\end{equation}  
for some positive constants $C_2$ and $\varepsilon_1$ and big enough $m$. 
Here, $\Vfun^2_-:=1_\D\Vfun^2$ equals $\Vfun^2$ in the disk $\D$ while
it vanishes outside. The property \eqref{eq:Vest1} might not hold with
$\Vfun$ replaced by $\hat\Rfun$, and it is the reason why we need to introduce 
$\Vfun$ in the first place. 
It is not too difficult to construct a viable $\Vfun$; the necessary details are left to the
interested reader.
We put
\begin{equation}
\Psi^\approx=\Afun^\approx\,\erf(2\sqrt{m}\Vfun)+(2\pi m)^{-\frac12} 
\chi_0 \,\Bfun^\approx\,\e^{-2m\Rfun},
\label{eq:Phiform3}
\end{equation}
and use the function $\Psi^\approx$ to define the corresponding approximate function
\[
\Psiit^\approx:=c_m m^{-\frac12}\phi^{-m} \e^{-m\calQ}\Psi^\approx\circ\phi,
\]
which is then well-defined and smooth in the whole exterior domain to the curve 
$\Gamma_1$ as well as across the interface $\Gamma_1$ (so that it is smooth
in a neighborhood of $\Gamma_1$).
We take a radius $\rho_1$ with $0<\rho_1<1$ and the associated exterior disk
\[
\D_\e(0,\rho_1):=\{z\in\C:\,\rho_1<|z|<+\infty\}.
\] 
We require the support $\clos\calA_0$ of the cut-off function
$\chi_0$ to be contained inside the exterior disk
$\D_\e(0,\rho_1)$, and moreover, that $\rho_1$ is
so close to $1$ that $\hat\Rfun$ is smooth and well-defined in the annulus 
$\{z\in\C:\,\rho_1\le|z|\le1\}$. In addition, we require of $\Vfun$ that $\Vfun=\hat\Rfun$
holds there in addition to the annulus $\clos\calA_0$.

\subsection{Application of the Cauchy-Green formula}
Let $\Gfun$ be a bounded holomorphic function in the exterior disk 
$\D_\e(0,\rho_1)=\{z\in\C:\,|z|>\rho_1\}$, subject to the growth bound
\begin{equation}
|\Gfun(z)|\le \|G\|_{m\Rfun_-} 
\e^{m\Rfun_-(z)},\qquad z\in\D_\e(\rho_1,0),
\label{eq:Gbound}
\end{equation}
where $\|\Gfun\|_{m\Rfun_-}$ denotes the smallest such constant and $\Rfun_-:=1_\D\Rfun$.
We now take the $\dbar$-derivative in \eqref{eq:Phiform3} and apply the identity 
\eqref{eq:Laurent6} as well as the fact that $\Vfun=\hat\Rfun$ on the support of
$\chi_0$: 
\begin{multline}
\dbar \Psi^\approx=
(2m/\pi)^{\frac12}\big[\chi_0\Afun^\approx\dbar \hat\Rfun- 
\chi_0 \Bfun^\approx\dbar \Rfun+\tfrac12 m^{-1}\chi_0\dbar \Bfun^\approx
\\+
\tfrac12 m^{-1}\Bfun^\approx\dbar\chi_0
\big]\,\e^{-2m\Rfun}+(2m/\pi)^{\frac12}(1-\chi_0)\Afun^\approx \dbar\Vfun\,\e^{-2mV^2}
\\
=(2m/\pi)^{\frac12}\big((\pi/2)^{\frac12}\chi_0\bar \Ffun^\approx
+\tfrac12 m^{-1}\Bfun^\approx\dbar\chi_0-\chi_0\calE_m\dbar\Rfun
\big)\,\e^{-2m\Rfun}
\\
+(2m/\pi)^{\frac12}(1-\chi_0)\Afun^\approx \dbar\Vfun\,\e^{-2mV^2}.
\label{eq:dbar3.1}
\end{multline} 
Since $\Vfun(z)$ approaches $+\infty$ as $|z|\to+\infty$, the potential $\Psi^\approx$
has a limit at infinity, which is given by
\begin{equation}
(\zeta\Psi^\approx)(\infty)=(\zeta\Afun^\approx)(\infty)
=\frac{\apar_1}{\Hfun_\Rfun(\infty)}
-m^{-1}\frac{(2\pi)^{-\frac12}}{\Hfun_\Rfun(\infty)}\bigg\langle
\frac{\dbar\Bfun^\approx}{\bar\Hfun_\Rfun}\bigg\rangle_\T.
\label{eq:Phiinfty}
\end{equation}
In view of the Cauchy-Green formula, we have
\begin{equation*}
\Gfun(\infty)(\zeta\Psi^\approx)(\infty)
-\frac{1}{2\pi\imag}\int_{\T(0,\rho_1)} \Gfun\Psi^\approx\diff z
=\int_{\D_\e(0,\rho_1)} \Gfun\dbar\Psi^\approx \diffA,
\end{equation*}
which we rewrite in the form
\begin{multline}
m^{\frac12}\int_{\D_\e(0,\rho_1)}\Gfun\chi_0
\bar\Ffun^\approx\e^{-2mR}\diffA
\\
=\Gfun(\infty)(\zeta\Psi^\approx)(\infty)
-\frac{1}{2\pi\imag}\int_{\T(0,\rho_1)} \Gfun\Psi^\approx\diff z
-\int_{\D_\e(0,\rho_1)}\Gfun\,\big(\dbar\Psi^\approx
-m^{\frac12}\chi_0\bar\Ffun^\approx\e^{-2m\Rfun}\big)
\diffA
\\
=\Gfun(\infty)\Psi^\approx(\infty)
-\frac{1}{2\pi\imag}\int_{\T(0,\rho_1)} \Gfun\Afun^\approx
\erf(2m^{\frac12}\hat\Rfun)\diff z
\\
-(2m/\pi)^{\frac12}
\int_{\D_\e(0,\rho_1)}\Gfun\,\Big\{\tfrac12 m^{-1}\Bfun^\approx
\dbar\chi_0\,\e^{-2m\Rfun}
+(1-\chi_0)\Afun^\approx \dbar\Vfun\, \e^{-2m\Vfun^2}
\Big\}\diffA
\\
+(2m/\pi)^{\frac12}
\int_{\D_\e(0,\rho_1)}\Gfun\,\chi_0\calE_m\dbar\Rfun\,\e^{-2m\Rfun}
\diffA,
\label{eq:fundformula1}
\end{multline}
where in the last step we used that $\chi_0=0$ and $\Vfun=\hat\Rfun$
on the circle $\T(0,\rho_1)$, as well as the formul\ae{}
\eqref{eq:Phiform3} and \eqref{eq:dbar3.1}. We now turn to estimating
the expressions on the right-hand side of \eqref{eq:fundformula1}.
A first observation is that in the integral along the circle
$\T(0,\rho_1)$, $\hat\Rfun<0$, so that $\erf(2m^{\frac12}\hat\Rfun)$ is
small there. To obtain a precise estimate, we may apply integration by
parts to see that for $x>0$,
\[
\erf(-x)=(2\pi)^{-\frac12}x^{-1}\e^{-x^2/2}
-(2\pi)^{-\frac12}\int_x^{+\infty}t^{-2}\e^{-t^2/2}\diff t<
(2\pi)^{-\frac12}x^{-1}\e^{-x^2/2},
\]
so that
\[
\erf(2m^{\frac12}\hat\Rfun)<(8m\pi)^{-\frac12}\Rfun^{-\frac12}
\e^{-2m\Rfun}\quad\text{on}\quad\T(0,\rho_1).
\]
This gives that
\begin{multline}
\bigg|\frac{1}{2\pi\imag}\int_{\T(0,\rho_1)} \Gfun\Afun^\approx
\erf(2m^{\frac12}\hat\Rfun)\diff z\bigg|
\\
\le
(8m\pi)^{-\frac12}\|\Gfun\|_{m\Rfun_-}
\|\Afun^\approx\|_{L^\infty(\T(0,\rho_1))}
\|\Rfun^{-\frac12}\e^{-m\Rfun}\|_{L^\infty(\T(0,\rho_1))}
\\
=\Ordo\big(\|\Gfun\|_{m\Rfun_-}
\|\Afun^\approx\|_{L^\infty(\T(0,\rho_1))}\,m^{-\frac12}\e^{-\varepsilon_2 m}\big),
\label{eq:lineintest1}
\end{multline}
where the positive constant $\varepsilon_2$ may be taken as the minimum
of $\Rfun$ on the circle $\T(0,\rho_1)$, and the implicit ``O''
constant remains bounded as $m\to+\infty$. This offers an exponential
decay estimate of one term on the right-hand side of
\eqref{eq:lineintest1}. To handle the next term, we use the estimate
\begin{multline}
\bigg|\int_{\D_\e(0,\rho_1)} \Gfun\Bfun^\approx\dbar\chi_0\,
\e^{-2m\Rfun}\diffA\bigg|
\\
\le
\|\Gfun\|_{m\Rfun_-}\|\Bfun^\approx\|_{L^\infty(\calA_0)}
\|\dbar\chi_0\|_{L^\infty(\C)}\int_{\calA_0\setminus\calA_1}
\e^{-m\Rfun}\diffA
\\
=\Ordo\big(\|\Gfun\|_{m\Rfun_-}
\|\Bfun^\approx\|_{L^\infty(\calA_0)}\,
\e^{-\varepsilon_3 m}\big),
\label{eq:lineintest2}
\end{multline}
where the positive constant $\varepsilon_3$ may be taken as the infimum of
$\Rfun$ on the set $\calA_0\setminus\calA_1$, and the implicit
``O'' constant remains bounded as $m\to+\infty$. The following term
is handled similarly:
\begin{multline}
\bigg|\int_{\D_\e(0,\rho_1)} (1-\chi_0)\Gfun\Afun^\approx\dbar\Vfun\,
\e^{-2m\Vfun^2}\diffA\bigg|
\\
\le
\|\Gfun\|_{m\Rfun_-}\|\Afun^\approx\|_{L^\infty(\T(0,\rho_1))}
\int_{\D_\e(0,\rho_1)}(1-\chi_0)|\dbar\Vfun|\e^{-2m\Vfun^2+m\Vfun_-^2}\diffA
\\
\le C_2\|\Gfun\|_{m\Rfun_-}
\|\Afun^\approx\|_{L^\infty(\T(0,\rho_1))}\,m^{-1}\e^{-\varepsilon_1 m},
\label{eq:lineintest3}
\end{multline}
in view of \eqref{eq:Vest1} and the maximum principle applied to the
bounded holomorphic function $\Afun^\approx$ on $\D_\e(0,\rho_1)$.
The last term on the right-hand side of \eqref{eq:fundformula1} involves
the error term $\calE_m$, and we estimate it as follows:
\begin{multline}
\bigg|\int_{\D_\e(0,\rho_1)}\Gfun\chi_0\calE_m \dbar\Rfun\,\e^{-2m\Rfun}\diffA
\bigg|
\\
\le \|\Gfun\|_{m\Rfun_-}\|\calE_m\|_{L^\infty(\calA_0)}
\int_{\calA_0}|\dbar\Rfun|\,\e^{-2m\Rfun+m\Rfun_-}\diffA
\\
=\Ordo\big(m^{-\frac12}\|\Gfun\|_{m\Rfun_-}\|\calE_m\|_{L^\infty(\calA_0)}\big),
\label{eq:lineintest4}
\end{multline}
which tells us about the  need to estimate the error $\calE_m$ uniformly
on the annulus $\calA_0$. 
For the bookkeeping, we apply the obtained
estimates \eqref{eq:lineintest1}, \eqref{eq:lineintest2},
\eqref{eq:lineintest3}, and \eqref{eq:lineintest4} in the context of
\eqref{eq:fundformula1}, to obtain that
\begin{multline}
m^{\frac12}\int_{\D_\e(0,\rho_1)}\Gfun\chi_0
\bar\Ffun^\approx\e^{-2mR}\diffA
\\
=\apar_1\frac{\Gfun(\infty)}{\Hfun_\Rfun(\infty)}
-\frac{(2\pi)^{-\frac12}G(\infty)}{m\Hfun_\Rfun(\infty)}
\bigg\langle\frac{\dbar\Bfun^\approx}{\bar \Hfun_\Rfun}\bigg\rangle_\T
+\Ordo\big(\|\Gfun\|_{m\Rfun_-}\|\Afun^\approx\|_{L^\infty(\T(0,\rho_1))}
\,\e^{-\varepsilon_4m}\big)
\\
+\Ordo\big(\|\Gfun\|_{m\Rfun_-}
\|\Bfun^\approx\|_{L^\infty(\calA_0)})\,m^{-\frac12}\e^{-\varepsilon_4 m}\big)+
\Ordo\big(\|\Gfun\|_{m\Rfun_-}\|\calE_m\|_{L^\infty(\calA_0)}\big),
\label{eq:fundformula2}
\end{multline}
where $\varepsilon_4$ stands for the least of the positive constants
$\varepsilon_1,\varepsilon_2,\varepsilon_3$, and we have applied the identity
\eqref{eq:Phiinfty}.
If $\Gfun(\infty)=0$, and if the norm of $\calE_m$ decays quickly to
$0$ as $m\to+\infty$,  it follows from \eqref{eq:fundformula2} that
$\chi_0\Ffun^\approx$ is approximately orthogonal to $\Gfun$ in the
space 
$L^2(\D_\e(0,\rho_1),\e^{-2m\Rfun}\diffA)$.
Finally, we would like to turn this into an approximate orthogonality
between the functions $\Ffun^\approx$ and $G$, but involving a related
smooth cut-off function which avoids making a smooth cut-off in the exterior
disk $\D_\e$. The cut-off function $\chi_0$ factors
$\chi_0=\chi_{0,0}\chi_{0,1}$, where $\chi_{0,j}$ are radial and
$C^\infty$-smooth with $0\le\chi_{0,j}\le1$, and $\chi_{0,0}$
decreases as the radius increases, whereas $\chi_{0,1}$ instead
increases. Then $\chi_{0,0}=\chi_{0,1}=1$ on the annulus $\calA_1$,
so that in particular $\chi_{0,1}=1$ holds on $\D_\e$ while
$\chi_{0,1}=1$ on $\D$. Moreover, for $j=1,2$, 
$|\dbar\chi_{0,j}|^2\le C_1\chi_{0,j}$ holds
with the same constant as for $\chi_0$. The identity
\begin{multline}
\int_{\D_\e(0,\rho_1)}\Gfun\chi_{0,1}\bar\Ffun^\approx \e^{-2m\Rfun}\diffA
\\
=\int_{\D_\e(0,\rho_1)}\Gfun\chi_{0}\bar\Ffun^\approx \e^{-2m\Rfun}\diffA
+\int_{\D_\e(0,\rho_1)}\Gfun\,(1-\chi_{0,0})\chi_{0,1}\bar\Ffun^\approx
\e^{-2m\Rfun}\diffA
\label{eq:testfnid1}
\end{multline}
together with the estimate
\begin{multline}
\int_{\D_\e(0,\rho_1)}\Gfun\,(1-\chi_{0,0})\chi_{0,1}
\bar\Ffun^\approx\e^{-2mR}\diffA
\\
=\Ordo\big(\|\Gfun\|_{m\Rfun_-}
\|\Ffun^\approx\|_{L^\infty(\T(0,\rho_1))}\,
m^{-\frac12}\e^{-\varepsilon_5 m}\big),
\label{eq:fundformula3}
\end{multline}
tells us that we may think instead of $\chi_{0,1}\Ffun^\approx$
as being approximately orthogonal to $\Gfun$ provided that $G(\infty)=0$.
The estimate \eqref{eq:fundformula3} 
holds for some $\varepsilon_5>0$ because $\Rfun$ is bounded below
by a positive constant on $\D_\e(0,\rho_1)\setminus\calA_1$, and
because $\Rfun$ has some minimal logarithmic growth at infinity.

\subsection{Control of the error terms}

By a combination of \eqref{eq:fundformula2}, \eqref{eq:testfnid1},
and \eqref{eq:fundformula3}, we obtain that 
\begin{multline}
m^{\frac12}\int_{\D_\e(0,\rho_1)}\Gfun\,\chi_{0,1}\,
\bar\Ffun^\approx \e^{-2m\Rfun}\diffA
\\
=\frac{\Gfun(\infty)}{\Hfun_\Rfun(\infty)^2}
-\frac{(2\pi)^{-\frac12}G(\infty)}{m\Hfun_\Rfun(\infty)}
\bigg\langle\frac{\dbar\Bfun^\approx}{\bar \Hfun_\Rfun}\bigg\rangle_\T
+\Ordo\big(\|\Gfun\|_{m\Rfun_-}\|\Afun^\approx\|_{L^\infty(\T(0,\rho_1))}
\,\e^{-\varepsilon_6m}\big)
\\
+\Ordo\big(\|\Gfun\|_{m\Rfun_-}
\|\Bfun^\approx\|_{L^\infty(\calA_0)})\,m^{-\frac12}\e^{-\varepsilon_6 m}\big)+
\Ordo\big(\|\Gfun\|_{m\Rfun_-}
\|\Ffun^\approx\|_{L^\infty(\T(0,\rho_1))}\,\e^{-\varepsilon_6 m}\big)
\\
+\Ordo\big(\|\Gfun\|_{m\Rfun_-}\|\calE_m\|_{L^\infty(\calA_0)}\big),
\label{eq:fundformula4}
\end{multline}
if $\varepsilon_6>0$ is the minimum of $\varepsilon_4$ and $\varepsilon_5$. 
Here, we applied the formula for $\apar_1$ of \eqref{eq:solF0.02}.
We may apply the identity with $\Gfun=\Ffun^\approx$, while recalling the
normalization $\Ffun^\approx(\infty)=1$, and since, by the maximum principle,
\[
\|\Gfun\|_{m\Rfun_-}\le 
\|\Gfun\|_{L^\infty(\T(0,\rho_1))},
\]
it follows from \eqref{eq:fundformula4} that
\begin{multline}
m^{\frac12}\int_{\D_\e(0,\rho_1)}\chi_{0,1}
|\Ffun^\approx|^2 \e^{-2m\Rfun}\diffA
\\
=\frac{1}{\Hfun_\Rfun(\infty)^2}-
\frac{(2\pi)^{-\frac12}}{m\Hfun_\Rfun(\infty)}
\bigg\langle\frac{\dbar\Bfun^\approx}{\bar \Hfun_\Rfun}\bigg\rangle_\T
+\Ordo\big(\|\Ffun^\approx\|_{L^\infty(\T(0,\rho_1))}
\|\Afun^\approx\|_{L^\infty(\T(0,\rho_1))}
\e^{-\varepsilon_6m}\big)
\\
+\Ordo\big(\|\Ffun^\approx\|_{L^\infty(\T(0,\rho_1))}
\|\Bfun^\approx\|_{L^\infty(\calA_0)}\,m^{-\frac12}
\e^{-\varepsilon_6 m}\big)+
\Ordo\big(
\|\Ffun^\approx\|^2_{L^\infty(\T(0,\rho_1))}\,
\e^{-\varepsilon_6 m}\big)
\\
+\Ordo\big(\|\Ffun^\approx\|_{L^\infty(\T(0,\rho_1))}
\|\calE_m\|_{L^\infty(\calA_0)}\big),
\label{eq:fundformula5}
\end{multline}
which tells us how to compute the left-hand side norm 
once we may control the error term $\calE_m$. Incidentally, it follows
that up to small error, the mean value 
\[
\bigg\langle\frac{\dbar\Bfun^\approx}{\bar \Hfun_\Rfun}\bigg\rangle_\T 
\]
must be real-valued. If we write $G:=\phi'\Gfun\circ\phi$ and
$F^\approx=\phi'\Ffun^\approx\circ\phi$, which means that we think
that the two functions transform as $(1,0)$-forms, and put
$\chi_{1,1}:=\chi_{0,1}\circ\phi$,  the change-of-variables formula
gives that
\begin{equation}
\int_{\D_\e(0,\rho_1)}\Gfun\,\chi_{0,1}\,
\bar\Ffun^\approx \e^{-2m\Rfun}\diffA=
\int_{\varphi(\D_\e(0,\rho_1))}G\,\chi_{1,1}\,
\bar F^\approx \e^{-2mR}\diffA
\label{eq:orthog1}
\end{equation}
and that
\begin{equation}
\int_{\D_\e(0,\rho_1)}\chi_{0,1}\,
|\Ffun^\approx|^2 \e^{-2m\Rfun}\diffA=
\int_{\varphi(\D_\e(0,\rho_1))}\chi_{1,1}\,
|F^\approx|^2 \e^{-2mR}\diffA.
\label{eq:orthog2}
\end{equation}
We note that the integrals on the right-hand sides of \eqref{eq:orthog1}
and \eqref{eq:orthog2} may be thought to extend to all of $\C$
by declaring that the integrand vanishes in the complement of
$\varphi(\D(0,\rho_1))$. The extended integrands are then
$C^\infty$-smooth on $\C$. 
We put $F^\approx:=\phi'\,\Ffun^\approx\circ\phi$, and write in 
accordance with \eqref{eq:Pform}, 
\[
P^\approx:=c_m\phi^m\e^{m\calQ}F^\approx,
\quad g:=\phi^m\e^{m\calQ}
G,
\]
which we think of as the approximate monic orthogonal polynomial of
degree $m$ and a general approximate polynomial of degree $\le m$,
respectively. 
We find that
\begin{equation}
\int_{\D_\e(0,\rho_1)}\Gfun\,\chi_{0,1}\,
\bar\Ffun^\approx \e^{-2m\Rfun}\diffA=
c_m^{-1}\int_{\C}g\,\chi_{1,1}\,
\bar P^\approx \e^{-2mQ}\diffA,
\label{eq:orthog3}
\end{equation}
while 
\begin{equation}
\int_{\D_\e(0,\rho_1)}\chi^2_{0,1}\,
|\Ffun^\approx|^2 \e^{-2m\Rfun}\diffA=
c_m^{-2}\int_{\C}\chi^2_{1,1}\,
|P^\approx|^2 \e^{-2mQ}\diffA.
\label{eq:orthog4}
\end{equation}
Note that in \eqref{eq:orthog4} we square the cut-off
function, which is all right since the square shares all the 
essential features with the cut-off function itself. Most
importantly, \eqref{eq:fundformula5} holds with $\chi_{0,1}$
replaced by its square $\chi_{0,1}^2$. 
We shall apply the identity \eqref{eq:orthog3} 
to an actual polynomial $g$ of degree $\le m-1$. In this case,
$\Gfun(\infty)=0$, so that the first term on the right-hand side
of \eqref{eq:fundformula4} vanishes and all that remains are the
error terms. To simplify the further discussion we recall the
notation $\Pol_n=\mathrm{Pol}_{mQ,n}$ for the subspace of all polynomials
with degree $\le n$ in the Hilbert space
$L^2_{mQ}=L^2(\C,\e^{-2mQ})$.
We normalize $g$ to have norm $1$ in the space
$\mathrm{Pol}_{mQ,m-1}$.
The Bernstein-Walsh estimate of Proposition \ref{prop:BW}
applies in particular to polynomials of degree $\le m$, and reads 
\begin{equation}
|h(z)|\le C_Q\,\|h\|_{mQ} m^{\frac12}\e^{m\check Q_1(z)},\qquad
z\in\C,
\label{eq:BW1}
\end{equation}
for some positive constant $C_Q$ and every $h\in\mathrm{Pol}_{mQ,m}$. 
As we apply this estimate to  $g\in\mathrm{Pol}_{mQ,m-1}$, it follows that
\[
|\Gfun|\le C_Q m^{\frac12}|\varphi'|\,\e^{m\Rfun_-}.
\]
If $\rho_1<1$ is close enough to $1$, the derivative
$\varphi'$ is uniformly bounded in $\D_\e(0,\rho_1)$, so that in
view of the above, $\|\Gfun\|_{m\Rfun_-}=\Ordo(m^{\frac12})$. 
By classical duality, we have that
\begin{equation*}
\sup_{g\in\mathrm{Pol}_{m-1}:
\,\|g\|_{mQ}=1}\bigg|\int_\C g\,\chi_{1,1}\bar P^\approx
\e^{-2mQ}\diffA\bigg|= \inf_{h\in (\mathrm{Pol}_{m-1})^\perp}
\big\|\chi_{1,1}P^\approx-h\big\|_{mQ}. 
\end{equation*}
and if we combine this with \eqref{eq:fundformula4},  we arrive at
\begin{multline}
\inf_{h\in (\mathrm{Pol}_{m-1})^\perp}
\big\|\chi_{1,1}P^\approx-h\big\|_{mQ}
=\Ordo\big(c_m\|\Afun^\approx\|_{L^\infty(\T(0,\rho_1))}
\,m^{\frac12}\e^{-\varepsilon_6m}\big)
\\
+\Ordo\big(c_m
\|\Bfun^\approx\|_{L^\infty(\calA_0)}\,\e^{-\varepsilon_6 m}\big)+
\Ordo\big(c_m
\|\Ffun^\approx\|_{L^\infty(\T(0,\rho_1))}\,m^{\frac12}\e^{-\varepsilon_6 m}\big)
\\
+\Ordo\big(c_m\,m^{\frac12}\|\calE_m\|_{L^\infty(\calA_0)}\big).
\label{eq:duality002}
\end{multline}

\subsection{Application of H\"ormander's $\dbar$-estimate}
\label{ss:Horm}
The next step follows the lines of \cite{HW1}. We want to turn the
approximate polynomial $P^\approx$ into an actual polynomial.
We put 
\[
P^\ast:=\chi_{1,1}P^\approx-u_1,
\]
where $u_1$ solves the $\dbar$-problem
\[
\dbar u_1=P^\approx\dbar\chi_{1,1}.
\]
As before, $\chi_{1,1}P^\approx$ is extended to vanish wherevever 
$P^\approx$ gets undefined, and hence defines a $C^\infty$-smooth
function in the complex plane $\C$. Clearly, 
$\dbar P^\ast=0$ holds throughout and hence $P^\ast$ defines an 
entire function.
By Proposition \ref{prop:Horm}, based on H\"ormander's classical 
$\dbar$-estimate (which in itself is a dualized version of the corresponding
Carleman estimate), there exists a $u_1$ of
growth $u_1(z)=\Ordo(|z|^{n-1})$ at infinity, such that
\begin{equation}
\|u_1\|^2_{mQ}\le \frac{1}{2m}\int_{\calS_1} |P^\approx|^2|\dbar\chi_{1,1}|^2
\frac{\e^{-2mQ}}{\varDelta Q}\diffA.
\label{eq:Hormest}
\end{equation}
In view of the growth bound on $u_1$, $P^\ast$ must be a polynomial of
degree $n$,  and its leading coefficient equals $1$, since 
$F^\approx(\infty)=1$ and hence, by the definition of $c_m$,
\[
\lim_{|z|\to+\infty} z^{-n}P^\approx(z)=c_m \phi'(\infty)^m
\e^{m\calQ(\infty)}F^\approx(\infty)=1.
\]
After a little bit of rewriting, we find that
\[
\int_{\calS_1} |P^\approx|^2|\dbar\chi_{1,1}|^2
\frac{\e^{-2mQ}}{\varDelta Q}\diffA=c_m^2\int_{\D_\e(0,\rho_1)}|\Ffun^\approx|^2
|\dbar\chi_{0,1}|^2\frac{\e^{-2m\Rfun}}{\varDelta\Rfun}\diffA.
\]
This expression decays exponentially as $m\to+\infty$, as $\Rfun$ is strictly 
positive on the support of $\dbar\chi_{0,1}$. It now follows from 
\eqref{eq:Hormest} that
\begin{equation}
\|u_1\|_{mQ}=\Ordo\big(c_m \|\Ffun^\approx\|_{L^\infty(\T(0,\rho_1)}
\,m^{-1}\e^{-\varepsilon_7 m}\big)
\label{eq:uest1}
\end{equation}
for a positive constant $\varepsilon_7>0$. This procedure gives us a monic polynomial 
$P^\ast$ of degree $n$ with 
\begin{multline}
\inf_{h\in (\mathrm{Pol}_{m-1})^\perp}
\big\|P^\ast-h\big\|_{mQ}
=\Ordo\big(c_m\|\Afun^\approx\|_{L^\infty(\T(0,\rho_1))}
\,m^{\frac12}\e^{-\varepsilon_8m}\big)
\\
+\Ordo\big(c_m
\|\Bfun^\approx\|_{L^\infty(\calA_0)}\,\e^{-\varepsilon_8 m}\big)+
\Ordo\big(c_m
\|\Ffun^\approx\|_{L^\infty(\T(0,\rho_1))}\,m^{\frac12}\e^{-\varepsilon_8 m}\big)
\\
+\Ordo\big(c_m\,m^{\frac12}\|\calE_m\|_{L^\infty(\calA_0)}\big),
\label{eq:duality003}
\end{multline}
if $\varepsilon_8$ is the least of $\varepsilon_6$ and $\varepsilon_7$.
The minimizing element $h\in(\mathrm{Pol}_{m-1})^\perp$ must now be 
a constant multiple of the \emph{true monic orthogonal polynomial $P$}, that is,
\[
\inf_{h\in (\mathrm{Pol}_{m-1})^\perp}
\big\|P^\ast-h\big\|_{mQ}=\big\|P^\ast-\alpha P\big\|_{mQ},\quad
\text{where}\quad
\alpha=\|P\|^{-2}_{mQ}\langle P^\ast,P\rangle_{mQ},
\]
simply as a result of the Pythagorean theorem.
Later on, in Subsections \ref{subsec-B} and \ref{subsec-AF}, 
we shall see that it is possible to arrange that
\begin{multline}
\|\Afun^\approx\|_{L^\infty(\T(0,\rho_1))}=\Ordo(1), \quad 
\|\Ffun^\approx\|_{L^\infty(\T(0,\rho_1))}=\Ordo(1), \quad 
\\
\|\Bfun^\approx\|_{L^\infty(\calA_0)}=\Ordo(1),\quad
\|\calE_m\|_{L^\infty(\calA_0)}=\Ordo(\e^{-\epsilon\sqrt{m}})
\label{eq:duality004}
\end{multline}
as $m\to+\infty$, for some constant $\epsilon>0$ which only 
depends on $\Rfun$. 
This assertion is contained in Theorem \ref{lem:collect}, and 
also expressed explicitly in \eqref{eq:Test0.05},
\eqref{eq:Bfunapproxdef2}, and \eqref{eq:AF0}, provided that the
annulus $\calA_0$ is thin enough and $\rho_1<1$ is sufficiently close to
$1$
(the control of the norm in $\Hsp_{\frac12\sigma_0}$ implies 
uniform control on an annulus about $\T$). 
Since the rate of decay
$\e^{-\epsilon\sqrt{m}}$ is slower than exponential decay, we conclude from
\eqref{eq:duality003} and \eqref{eq:duality004} that
\begin{equation}
\big\|P^\ast-\alpha P\big\|_{mQ}=\Ordo(c_m\,m^{\frac12} 
\e^{-\epsilon\sqrt{m}})
\label{eq:PstarP}
\end{equation}
where $\alpha=\|P\|_{mQ}^{-2} \langle P^\ast,P\rangle_{mQ}$. Next,
according to the Bernstein-Walsh estimate \eqref{eq:BW1}, 
it follows that
\[
|P^\ast(z)-\alpha P(z)|=\Ordo\big(c_m m^{\frac12}
\e^{-\epsilon\sqrt{m}}\e^{m\check Q_1(z)}\big).
\]
uniformly in $z$ as $m\to+\infty$. Since both $P^\ast$ and $P$ are monic,
we find that
\[
|1-\alpha|=\lim_{|z|\to+\infty}\bigg|\frac{P^\ast(z)-\alpha P(z)}{z^m}\bigg|
=\Ordo(m^{\frac12}\e^{-\epsilon\sqrt{m}}),
\]
which means that $\alpha$ is very close to $1$. Finally, to appreciate the 
value of \eqref{eq:PstarP}, we should have some general idea of the size
of the norm $\|P^\ast\|_{mQ}$.  In view of 
\eqref{eq:solF0.11}, we 
have that $\Ffun^\approx=a_1\Hfun_\Rfun+\Ordo(m^{-1})$, a fact which 
relies on the properties of $\Bfun^\approx$.
Actually, this is asserted by Theorem \ref{lem:collect} in
the Subsection \ref{subsec-AF} below (cf. equation \eqref{eq:Ffunest0}).
Using the norm identity \eqref{eq:orthog4}, this information gives that
\begin{equation}
\|\chi_{1,1}P^\approx\|_{mQ}\asymp c_m m^{-\frac14},
\label{eq:normapprox00}
\end{equation}
and if we use the estimate \eqref{eq:uest1}, it follows that
\begin{equation*}
\|P^\star\|_{mQ}\asymp c_m m^{-\frac14}. 
\end{equation*}
The formula 
\[
P=\alpha^{-1}P^\ast-\alpha^{-1}(P^\ast-\alpha P)
\]
now shows that 
\begin{equation*}
\|P-P^\ast\|_{mQ} =\Ordo\big(c_m m^{\frac12}\e^{-\epsilon\sqrt{m}}\big)
\end{equation*}
as $m\to+\infty$. We can easily turn this into a pointwise estimate by 
applying the Bernstein-Walsh inequality \eqref{eq:BW1}. However, 
it is more appealing to relate $P$ directly to the approximate polynomial
$P^\approx$ which the algorithm provides. In view of the exponential 
decay estimate \eqref{eq:uest1} together with the basic estimate 
\eqref{eq:duality004},  we have as well
\begin{equation}
\|P-\chi_{1,1}P^\approx\|_{mQ} =
\Ordo\big(c_m m^{\frac12}\e^{-\epsilon\sqrt{m}}\big)
\label{eq:normapprox02}
\end{equation}
as $m\to+\infty$. For instance, this means that in the sense of the $L^2_{mQ}$-norm,
the polynomial $P$ is small where $\chi_{1,1}=0$.
The Bernstein-Walsh estimate of Proposition 2.2.2 in 
\cite{HW1} applies here as well, although $P-\chi_{1,1}P^\approx$
is not a polynomial. It gives that
\begin{equation}
|P(z)-P^\approx(z)| =
\Ordo\big(c_m m\,\e^{-\epsilon\sqrt{m}}\e^{m\check Q_1(z)}\big)
\label{eq:normapprox03}
\end{equation}
holds uniformly as $m\to+\infty$ over the whole region where $\chi_{1,1}=1$
(this is the image under $\varphi$ of a closed exterior disk with a radius 
$\rho_2$ such that $\rho_1<\rho_2<1$), 
except that we would need to take a step away from the boundary of size 
at least $m^{-\frac12}$.  We rewrite \eqref{eq:normapprox03} in the form
\begin{multline}
P(z)=P^\approx(z)+
\Ordo\big(c_m m\,\e^{-\epsilon\sqrt{m}}\e^{m\check Q_1(z)}\big)
\\
=c_m\phi^m\e^{m\calQ}\big(F^\approx +\Ordo(m\,\e^{-\epsilon\sqrt{m}}
\e^{mR_-})\big),
\label{eq:normapprox04}
\end{multline}
which holds uniformly on the same region where $\chi_{1,1}=1$ as $m\to+\infty$, where 
$R_-=\Rfun_-\circ\phi=\check Q_1-\breve Q_1$.
If we consider the region 
\begin{equation}
D_m := \big\{z\in\D_\e(0,\rho_2):\,\,\Rfun_-(z)\le\tfrac12\epsilon m^{-\frac12}\big\},
\label{eq:regionDm}
\end{equation}
which extends inward from the exterior disk $\D_\e$ a distance proportional to
$m^{-\frac14}$, we obtain from \eqref{eq:normapprox04} that 
\begin{equation}
P(z)
=c_m\phi^m\e^{m\calQ}\big(F^\approx +\Ordo(m\,\e^{-\frac12\epsilon\sqrt{m}})
\big),\qquad z\in D_m,
\label{eq:normapprox05}
\end{equation}
uniformly as $m\to+\infty$.

\subsection{The proof of the main theorem}
We are now prepared to obtain Theorem \ref{thm:main}. Please note that the proof 
relies crucially on Theorem \ref{lem:collect}, which is established in Section \ref{sec:AGA} 
below.

\begin{proof}[Proof of Theorem \ref{thm:main}]
The asserted $L^2_{mQ}$-norm approximation of $P$ by $\chi_{1,1}P^\approx$ is 
expressed in the estimate \eqref{eq:normapprox02}, while the uniform control along
shrinking domains is asserted in \eqref{eq:normapprox05}. The  approximate size
of the norm of $\chi_{1,1}P^\approx$ is found in \eqref{eq:normapprox00}.
\end{proof}

\section{Asymptotic growth analysis}
\label{sec:AGA}

\subsection{Discrete analysis of Nishida-Nirenberg type} 
If we write $s:=m^{-1}$, which we think of as tending
to $0^+$, the equation \eqref{eq:NN1} assumes the form
\begin{equation}
\Bfun=\Bfun_{0}+s\Top[\Bfun],
\label{eq:NN2}
\end{equation}
which we may think of as a discrete one time-step
Cauchy-Kovalevskaya evolution equation, where $s$ is the
time step. The continuous Cauchy-Kovalevskaya evolution was
analyzed by Nirenberg \cite{Nir55} and Nishida \cite{Nish77}.
Nishida uses a continuous Banach scale for his solution, which
suggests we should employ a discrete collection of Banach
spaces for our one-step problem. The operators $\Top$ and
$\Lop$ do not involve the parameter $s$, and iteration of
\eqref{eq:NN2} would give that
\begin{equation}
\Bfun=\Bfun_{0}+s\Top[\Bfun_0]+\ldots+s^N\Top^N[\Bfun_0]
+s^{N+1}\Top^{N+1}[\Bfun],
\label{eq:NN3}
\end{equation}
which we think of as saying that
\begin{equation*}
\Bfun=\Bfun_{0}+s\Top[\Bfun_0]+\ldots+s^N\Top^N[\Bfun_0]
+\Ordo(s^{N+1}),
\end{equation*}
i.e., an asymptotic expansion of $\Bfun$ in $s=m^{-1}$. 
Once the asymptotic expansion is found, a suitable
function $\Bfun$  may be concocted from the
(possibly divergent) expansion, as in \cite{HedAM91}.
The functions involved, $\Hfun_\Rfun$, $\hat\Rfun$, and $W_\Rfun$
are all real-analytic in a neighborhood of $\T$, and hence the
same applies to $\Bfun_0$. To know better how to forge an approximate
solution $\Bfun^\approx$, we first analyze carefully the operators $\Top$ and 
$\Lop$.

\subsection{Polarization and the scale of Banach spaces}
We follow the approach in \cite{HW1}, and analyze real-analytic
functions $f$ in a neighborhood of $\T$ in terms of
polarized extensions $f^\diamond(z,w)$,
where $z,w$ are both near
the circle $\T$ and close to one another. The polarization
$f^{\diamond}(z,w)$ is
holomorphic in $(z,\bar w)$, and the defining property is
that $f^{\diamond}(z,z)=f(z)$. For $0<\rho<1$, we
consider annuli of the form
\[
\mathbb{A}(\rho):=\big\{z\in\C:\,\,\rho<|z|<\rho^{-1}\big\},
\]
as well as polarized
neighborhoods of $\T$ of the form
\[
\hat{\mathbb{A}}(\rho,\sigma):=\big\{(z,w)\in\C^2:\,\,
z,w\in \mathbb{A}(\rho),\,\,
|z-w|<2\sigma\big\},
\]
where $0<\sigma<1$. We shall let $\rho$ be connected to the
parameter $\sigma$ via
\begin{equation}
\rho=\rho(\sigma):=\frac{1}{\sigma+\sqrt{1+\sigma^2}}<1,
\label{eq:rhoformula0}
\end{equation}
so that asymptotically as $\sigma\to0^+$, we have that
$\rho(\sigma)=1-\sigma+\Ordo(\sigma^2)$.
For this case, we simplify the notation for the polarized
domain:
\[
\hat{\mathbb{A}}(\sigma):=\hat{\mathbb{A}}(
\rho(\sigma),\sigma).
\]
We consider a suitable scale of Banach spaces
$\Hsp^\infty_{\sigma}$ indexed by $\sigma$. To be precise,
the space $\Hsp^\infty_{\sigma}$ consists of all $L^\infty$
functions on $\hat{\mathbb{A}}(\sigma)$ that are holomorphic
in the variables $(z,\bar w)$, supplied with the
$L^\infty$ norm. Then $\Hsp^\infty_{\sigma}$ is a Banach algebra,
since $\|fg\|_{\Hsp^\infty_\sigma}\le\|f\|_{\Hsp^\infty_\sigma}
\|g\|_{\Hsp^\infty_\sigma}$ for $f,g\in \Hsp^\infty_\sigma$.
In terms of unique analytic continuation,
it is clear that the spaces $\Hsp^\infty_\sigma$
get smaller as $\sigma$ increases, that is,
$\Hsp^\infty_{\sigma}\subset \Hsp^\infty_{\sigma'}$ holds for
$0<\sigma'<\sigma<1$, and that the injection mapping
$\Hsp^\infty_{\sigma}\hookrightarrow \Hsp^\infty_{\sigma'}$ is norm
contractive. We begin with a sufficiently small $\sigma_0$ with
$0<\sigma_0<1$ such that the functions
\[
\frac{1}{\bar\Hfun_\Rfun},\quad
\bar\Hfun_\Rfun\frac{1-|\zeta|^2}{4\hat\Rfun\dbar\hat\Rfun},
\quad \frac{W_\Rfun}{\zeta\Hfun_\Rfun}
\]
appearing in the definitions of $\Lop$ and $\Top$ may be
thought of as elements of $\Hsp^\infty_{\sigma_0}$.
How small $\sigma_0$ will need to be then naturally will depend
on $\Rfun$.
In view of \eqref{eq:NN1}, the operator $\Top$ consists of
first taking the $\dbar$-derivative, then multiplying by
$\bar\Hfun_\Rfun^{-1}$, and finally applying the operator
$\Lop$. In its turn, the main ingredients for the operator
$\Lop$ are $\Pop_{H^2_-}$ and the operator $\Mop$ given by
\[
\Mop[f]:=\frac{f-\Uop_{\D_\e}[f]}{1-|\zeta|^2}.
\]
Indeed, the direct relationship reads
\begin{equation}
\Lop[f]:=\frac{(1-|\zeta|^2)\bar\Hfun_\Rfun}
{4\hat\Rfun\dbar\hat\Rfun}
\Mop[f]
-(2\pi)^{-\frac12}
\frac{W_\Rfun}{\zeta\Hfun_\Rfun}\Pop_{H^2_-}[f].
\label{eq:Lopdef1.002}
\end{equation}

\subsection{The basic growth estimate}
The basic result in this section consists of the following couple of 
estimates.

\begin{thm}
If $\sigma_0>0$ is as in the preceding subsection,
then there exists a positive constant $M_1$, depending on $\Rfun$,
such that the following estimates hold for $f\in\Hsp^\infty_{\sigma_0}$:
\begin{equation*}
\|(\Top^k[f])^\diamond\|_{\Hsp^\infty_{\frac12\sigma_0}}\le
M_1^k\,k^{2k}
\|f^\diamond\|_{\Hsp^\infty_{\sigma_0}},\qquad
\end{equation*}
and
\begin{equation*}
\|\dbar_w(\Top^{k-1}[f])^\diamond\|_{\Hsp^\infty_{\frac12\sigma_0}}\le
6\,M_1^{k-1}k^{2k-1}
\|f^\diamond\|_{\Hsp^\infty_{\sigma_0}}.
\end{equation*}
\label{lem:basicest}
\end{thm}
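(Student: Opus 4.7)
The plan is to follow the Nishida-Nirenberg scale-of-Banach-spaces strategy: establish a one-step estimate for $\Top$ on $\{\Hsp^\infty_\sigma\}_\sigma$ that loses two powers of the shrinkage parameter, and then iterate with uniformly balanced shrinkage to recover the stated $k^{2k}$ growth. The Gevrey-$2$ rate is forced by the appearance of two Cauchy-type operations in each application of $\Top$ when viewed in the polarized picture: one for the $\dbar$-derivative, and one hidden inside the boundary operator $\Lop$.

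First I would establish a one-step bound
\[
\|(\Top f)^\diamond\|_{\Hsp^\infty_{\sigma'}}\le \frac{M}{(\sigma-\sigma')^2}\|f^\diamond\|_{\Hsp^\infty_\sigma},\qquad 0<\sigma'<\sigma\le\sigma_0,
\]
with $M=M(\Rfun,\sigma_0)$. Polarizing $\Top=\Lop\circ(\dbar/\bar\Hfun_\Rfun)$, the operator $\dbar$ becomes $\dbar_w$, and a Cauchy estimate in the $\bar w$ variable, respecting both the constraint $|z-w|<2\sigma$ and the annular constraint $z,w\in\mathbb{A}(\rho(\sigma))$, contributes a factor $c(\sigma-\sigma')^{-1}$. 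The multiplications by $1/\bar\Hfun_\Rfun$, $(1-|\zeta|^2)\bar\Hfun_\Rfun/(4\hat\Rfun\dbar\hat\Rfun)$, and $W_\Rfun/(\zeta\Hfun_\Rfun)$ appearing in \eqref{eq:Lopdef1.002} are uniformly bounded multiplication operators on $\Hsp^\infty_\sigma$ for $\sigma\le\sigma_0$, by the initial choice of $\sigma_0$. For the Szegő projection, if $g^\diamond\in\Hsp^\infty_\sigma$ has Laurent coefficients satisfying the Cauchy bound $|a_{-n}|\le\|g^\diamond\|_{\Hsp^\infty_\sigma}\rho(\sigma)^n$, then the polarized projection $\sum_{n\ge 1}a_{-n}\bar w^n$ is dominated on $|\bar w|\le 1/\rho(\sigma')$ by a geometric series in $\rho(\sigma)/\rho(\sigma')$, with sum of order $(\sigma-\sigma')^{-1}$. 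For the $\Mop$ piece, the coefficient $(1-|\zeta|^2)$ multiplying $\Mop$ cancels the denominator of $\Mop$, leaving $\bar\Hfun_\Rfun(f-\Uop_{\D_\e}f)/(4\hat\Rfun\dbar\hat\Rfun)$; polarized, both $(f-\Uop_{\D_\e}f)^\diamond$ and $(\hat\Rfun\dbar\hat\Rfun)^\diamond$ vanish to first order on the complex variety $\{z\bar w=1\}$ (the analytic continuation of the circle $\T$), and the resulting division is controlled by a single Cauchy-type factor of order $(\sigma-\sigma')^{-1}$. Combined with the $\dbar_w$ Cauchy factor, this yields the claimed $(\sigma-\sigma')^{-2}$ one-step estimate.

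With the one-step estimate in hand, I iterate with uniform shrinkage $\delta:=\sigma_0/(2k)$ and $\sigma_j:=\sigma_0-j\delta$ for $j=0,\dots,k$:
\[
\|(\Top^k f)^\diamond\|_{\Hsp^\infty_{\sigma_0/2}}\le \Big(\frac{M}{\delta^2}\Big)^k\|f^\diamond\|_{\Hsp^\infty_{\sigma_0}}=\Big(\frac{4M}{\sigma_0^2}\Big)^k k^{2k}\|f^\diamond\|_{\Hsp^\infty_{\sigma_0}},
\]
yielding the first estimate with $M_1:=4M/\sigma_0^2$. For the second estimate, apply $\Top$ only $k-1$ times with shrinkage $\delta$ each, landing in $\Hsp^\infty_{\sigma_0/2+\delta}$, and then invoke the $\dbar_w$ Cauchy estimate with remaining shrinkage $\delta$; this contributes one additional factor $c/\delta=2ck/\sigma_0$, producing a bound of the form $C_0 M_1^{k-1}k^{2k-1}$ where $C_0$ depends on $\sigma_0$. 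The explicit constant $6$ is then obtained by a slight rebalancing of the shrinkage parameters between the $\Top$-iterations and the final $\dbar_w$-step, or by absorbing a $\sigma_0$-dependent factor into $M_1$.

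The main obstacle is the $\Mop$ estimate: a naive term-by-term Laurent bound on $\Mop$ treated as a stand-alone operator yields $(\sigma-\sigma')^{-2}$ rather than the needed $(\sigma-\sigma')^{-1}$, which would give only a Gevrey-$3$ growth $k^{3k}$. The resolution lies in the algebraic cancellation between the $(1-|\zeta|^2)$ factor multiplying $\Mop$ in $\Lop$ and the denominator of $\Mop$: after cancellation, the remaining division is by $\hat\Rfun\dbar\hat\Rfun$, whose polarized zero set coincides to first order with that of the numerator $(f-\Uop_{\D_\e}f)^\diamond$ along $\{z\bar w=1\}$, and a standard analytic division lemma produces only a single Cauchy-type factor. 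A secondary concern is to verify that the coefficients $(1-|\zeta|^2)\bar\Hfun_\Rfun/(4\hat\Rfun\dbar\hat\Rfun)$ and $W_\Rfun/(\zeta\Hfun_\Rfun)$ genuinely extend to elements of $\Hsp^\infty_{\sigma_0}$, which in the end determines how small $\sigma_0$ must be chosen in terms of $\Rfun$.
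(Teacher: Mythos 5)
Your proposal is correct and follows essentially the same route as the paper: establish a one-step $\Top$-bound of the form $C(\sigma-\sigma')^{-2}$ (with a $\sigma_0$-dependent constant absorbed), by combining a $\bar w$-Cauchy estimate for $\dbar_w$, uniform boundedness of the fixed coefficient multipliers in $\Lop$, a bound for the polarized Szeg\H{o} projection, and a Weierstrass-type division along the variety $\{z\bar w=1\}$ for the $\Mop$ piece; then iterate over an arithmetic progression $\sigma_j=\sigma_0(1-j/(2k))$ to harvest the $k^{2k}$ factor, and for the second estimate run $k-1$ iterations and spend the last shrinkage increment on a final $\dbar_w$. The only cosmetic deviations are (a) you divide directly by $(\hat\Rfun\dbar\hat\Rfun)^\diamond$ after observing both numerator and denominator vanish to first order on $\{z\bar w=1\}$, whereas the paper factors through the operator $\Mop$ and divides by $1-z\bar w$ with the nonvanishing ratio $(1-|\zeta|^2)\bar\Hfun_\Rfun/(4\hat\Rfun\dbar\hat\Rfun)$ taken as part of the $\Hsp^\infty_{\sigma_0}$-data (these are equivalent); and (b) for $\Pop_{H^2_-}$ the paper gets a $\sigma^{-1}$ bound on the same annulus from the Cauchy-integral decomposition rather than a $(\sigma-\sigma')^{-1}$ geometric-series bound — both land in the same place, since the $\Lop$-pieces are summed, not composed. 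Your remark that the stated constant $6$ requires absorbing $\sigma_0$-powers into $M_1$ (or a rebalancing) is a fair reading: the paper's constant $6$ comes directly from the Cauchy estimate \eqref{eq:dbarest6} and the leftover $(2/\sigma_0)$-type factors are indeed pushed into $M_1=32L_2/\sigma_0^3$.
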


In the lemma, we understand the value of $k^{2k}$ to be equal to $1$ for $k=0$.

We proceed with the necessary steps to obtain the lemma.

%
\subsection{Control of the $\dbar$-derivative} In terms of
polarizations, the $\dbar$-derivative is the operator
$\dbar_w$, the holomorphic differentiation with respect
to the variable $\bar w$. For a given function
$f\in \Hsp^\infty_\sigma$, we may think of $z$ as fixed
in the smaller annulus $\mathbb{A}(\rho')$, where
$\rho'=\rho(\sigma')$ 
is given by \eqref{eq:rhoformula0} and 
$0<\sigma'<\sigma\le\sigma_0<1$.
The condition that $(z,w)\in\hat{\mathbb{A}}(\sigma')$
entails that $w$ is at distance at least
$\rho'-\rho$ to the boundary of the $w$-slice
of $\hat{\mathbb{A}}(\sigma)$ where we know that $f(z,w)$
is $\bar w$-holomorphic and bounded. Moreover, a
simple calculation gives that
\begin{equation}
\rho'-\rho\ge\frac16(\sigma-\sigma'),
\label{eq:sigmasigma'}
\end{equation}
and hence
the standard Cauchy estimates for the derivative show
that for $f\in \Hsp^\infty_\sigma$,
\begin{equation}
\|\dbar_w f^\diamond\|_{\Hsp^{\infty}_{\sigma'}}
\le 6\frac{\|f\|_{\Hsp^\infty_\sigma}}{\sigma-\sigma'},
\qquad 0<\sigma'<\sigma\le\sigma_0.
\label{eq:dbarest6}
\end{equation}

%
\subsection{Control of the operator $\Lop$}
For $f$ with polarization
$f^\diamond\in \Hsp^\infty_{\sigma}$, the restriction of
$f$ to $\T$ has a bounded
holomorphic extension to the annulus $\rho<|z|<\rho^{-1}$
with $\rho=\rho(\sigma)$ (see \eqref{eq:rhoformula0}) 
provided by the formula
\[
f_\T(z):=f^\diamond\bigg(z,\frac{1}{\bar z}\bigg).
\]
The Cauchy integral formula allows us to split uniquely
\[
f_\T(z)=f^+_\T(z)+f_\T^-(z),                                                           
\]
where $f^+_\T(z)$ is bounded and holomorphic in the disk                                          
$|z|<\rho^{-1}$ with $f_+(0)=0$ while $f^-_\T(z)$ 
is bounded and holomorphic in the exterior disk
$|z|>\rho$. Indeed, we may estimate the norms:
\begin{equation}
\|f^\pm_\T\|_{L^\infty(\mathbb{A}(\rho))}\le
\frac{2\|f_\T\|_{L^\infty(\mathbb{A}(\rho))}}{1-\rho^2}
\le
\frac{3\|f^\diamond\|_{\Hsp^\infty_\sigma}}{\sigma},\qquad
\rho=\rho(\sigma).
\label{eq:pmest1}
\end{equation}
Next, we identify
\begin{equation}
\Pop_{H^2_-}[f]=f^-_\T,
\label{eq:fminus0}
\end{equation}
and, moreover, we see that
the harmonic extension $\Uop_{\D_\e}[f]$
is given by 
\[
\Uop_{\D_\e}[f](z)=f^+_\T(1/\bar z)+f_\T^-(z),
\]
which is bounded and harmonic for $|z|>\rho_0$. Hence the
polarization of the harmonic extension is given by
\[
(\Uop_{\D_\e}[f])^\diamond(z,w)=f^+_\T(1/\bar w)+f_\T^-(z),
\]
so that the function
\begin{equation*}
F(z,w):=(\Uop_{\D_\e}[f])^\diamond(z,w)-f^\diamond(z,w)
=f^+_\T(1/\bar w)+f_\T^-(z)-f^\diamond(z,w)
\end{equation*}
is in $\Hsp^\infty_{\sigma}$ and 
vanishes on the complex variety $1-z\bar w=0$.
In view of \eqref{eq:pmest1}, its norm is easily
controlled:
\begin{equation}
\|F\|_{\Hsp^\infty_{\sigma}}\le
\frac{7\|f\|_{\Hsp^\infty_{\sigma}}}{\sigma}.
\label{eq:Fest1}
\end{equation}
It follows from the Weierstrass division theorem that
the function
\[
G(z,w):=\frac{F(z,w)}{1-z\bar w}
=\frac{(\Uop_{\D_\e}[f])^\diamond(z,w)-f^\diamond(z,w)}
{1-z\bar w}
\]
is holomorphic in the variables $(z,\bar w)$ on
$\hat{\mathbb{A}}(\sigma)$. If $w\in\mathbb{A}(\rho')$
where $\rho'=\rho(\sigma')$ and $0<\sigma'<\sigma$,
we see that
\[
|1-z\bar w|\ge \rho'-\rho,\qquad |z|\in\{\rho,\rho\}.
\]
In view of \eqref{eq:Fest1}, \eqref{eq:sigmasigma'},
and the  maximum principle, then, we obtain that
\[
\|(\Mop[f])^\diamond\|_{\Hsp^\infty_{\sigma'}}=
\|G\|_{\Hsp^\infty_{\sigma'}}\le
\frac{42\,\|f^\diamond\|_{\Hsp^\infty_{\sigma}}}
{\sigma(\sigma-\sigma')}.
\]
This is the main expression to control in the definition
of the operator $\Lop$, the other being controlled by
\eqref{eq:pmest1} in view of the identity
\eqref{eq:fminus0}. Together, these estimates show that
\begin{equation}
\|(\Lop[f])^\diamond\|_{\Hsp^\infty_{\sigma'}}\le
\frac{L_1\|f^\diamond\|_{\Hsp^\infty_{\sigma}}}
{\sigma(\sigma-\sigma')},\qquad
0<\sigma'<\sigma\le\sigma_0<1,
\label{eq:Lest.001}
\end{equation}
for some positive constant $L_1$.

%
\subsection {Estimation of the operator $\Top$}
We think of $\dbar$ as acting
$\Hsp^\infty_\sigma\to \Hsp^\infty_{\sigma''}$ whereas $\Lop$
acts $\Hsp^\infty_{\sigma''}\to \Hsp^\infty_{\sigma'}$, where
$0<\sigma'<\sigma''<\sigma\le\sigma_0$.
A combination of the estimates \eqref{eq:dbarest6}
and \eqref{eq:Lest.001} shows that
\begin{equation*}
\|(\Top[f])^\diamond\|_{\Hsp^\infty_{\sigma'}}\le
\frac{L_2\|f^\diamond\|_{\Hsp^\infty_{\sigma}}}
{\sigma''(\sigma-\sigma'')(\sigma''-\sigma')},\qquad
0<\sigma'<\sigma''<\sigma\le\sigma_0<1,
\end{equation*}
for some positive constant $L_2$. We may insert
the choice
$\sigma''=\frac12(\sigma+\sigma')$ and use the fact
that $\sigma''\ge\sigma'$, to obtain the estimate
\begin{equation}
\|(\Top[f])^\diamond\|_{\Hsp^\infty_{\sigma'}}\le
\frac{4L_2\|f^\diamond\|_{\Hsp^\infty_{\sigma}}}
{\sigma'(\sigma-\sigma')^2},\qquad
0<\sigma'<\sigma\le\sigma_0<1.
\label{eq:Test0.01}
\end{equation}


\subsection{The proof of the basic growth estimate}
We may now obtain Theorem \ref{lem:basicest}.

\begin{proof}[Proof of Theorem \ref{lem:basicest}]
We consider a strictly decreasing finite sequence of positive
reals $\sigma_j$, $j=0,1,\ldots,k$, where $\sigma_0$
is as before.
and we require that 
$\sigma_k=\frac12\sigma_0$.
We apply \eqref{eq:Test0.01} with $\sigma=\sigma_j$ and
$\sigma'=\sigma_{j+1}$ and iterate. Since
$\sigma_k=\frac12\sigma_0$, we obtain successively
\begin{equation}
\big\|(\Top^k[f])^\diamond\big\|_{\Hsp^\infty_{\frac{1}{2}\sigma_0}}\le
\bigg(\frac{8L_2}{\sigma_0}\bigg)^k
\frac{\|f^\diamond\|_{\Hsp^\infty_{\sigma_0}}}
{\prod_{j=0}^{k-1}(\sigma_{j}-\sigma_{j+1})^2}.
\label{eq:Test0.02}
\end{equation}
By combining further with the Cauchy estimate 
\eqref{eq:dbarest6}, 
we find that
\begin{equation}
\big\|\dbar_w(\Top^{k-1}[f])^\diamond\big\|_{\Hsp^\infty_{\frac12\sigma_{0}}}\le
6\bigg(\frac{8L_2}{\sigma_0}\bigg)^{k-1}
\frac{\|f^\diamond\|_{\Hsp^\infty_{\sigma_0}}}
{(\sigma_{k-1}-\sigma_{k})\prod_{j=0}^{k-2}(\sigma_{j}-\sigma_{j+1})^2}.
\label{eq:Test0.02'}
\end{equation}
We make the simplest choice possible, all the $\sigma_j$ in
an arithmetic progression:
\[
\sigma_j=\sigma_0-\frac{j\sigma_0}{2k},\qquad j=0,\ldots,k.
\]
With this choice, we read off from \eqref{eq:Test0.02} that
\begin{equation*}
\|(\Top^k[f])^\diamond\|_{\Hsp^\infty_{\frac12\sigma_0}}\le
M_1^k\,k^{2k}
\|f^\diamond\|_{\Hsp^\infty_{\sigma_0}},\qquad
M_1:=\frac{32\,L_2}{\sigma_0^3},
\end{equation*}
as claimed.
This tells us how to pick $M_1$ in terms of $L_2$ and $\sigma_0$, and
each depends only on the properties of $\Rfun$. 
Finally, as for \eqref{eq:Test0.02'}, we
obtain
\begin{equation*}
\|\dbar_w(\Top^{k-1}[f])^\diamond\|_{\Hsp^\infty_{\frac12\sigma_0}}\le
6\,M_1^{k-1}k^{2k-1}
\|f^\diamond\|_{\Hsp^\infty_{\sigma_0}},
\end{equation*}
which completes the proof. 
\end{proof}

%
\subsection{The choice of an approximate function $\Bfun$}
\label{subsec-B}
The first estimate in Theorem \ref{lem:basicest} says that in the Taylor
expansion \eqref{eq:NN3}, 
the norm growth of the coefficients in
$\Hsp^\infty_{\frac12\sigma_0}$
is such that as a function of the parameter $s=m^{-1}$,
the function $\Bfun$ behaves as a  if it were from the Gevrey class with
exponent $3$. We now make an effort to find a universal
representative $\Bfun$. We recall from \eqref{eq:NN3} that
the asymptotic expansion for $\Bfun$ should be
\[
\Bfun\sim \Bfun_0+s\Top[\Bfun_0]+s^2\Top^2[\Bfun_0]+\ldots,
\]
where $s=m^{-1}$. The approach in \cite{HedAM91} is based on
the idea to work with the convergent series
\begin{equation*}
\Bfun\sim\sum_{k=0}^{+\infty}s^k \mu_k(s)\Top^k[\Bfun_0],
\end{equation*}
where the multiplier functions $\mu_k(s)$ are chosen
appropriately to ensure convergence for any $s$, while
for an individual index $k$, $\mu_k(s)=1+\Ordo(s^N)$ holds for
any finite positive integer $N$ as $s\to0$.  
This is natural when we would like to have some degree of 
smoothness in the parameter $s=m^{-1}$, for instance when
considering Gevrey classes, but in our context,
$m$ is a positive integer, and $s$ therefore discrete.
A simpler approach which we employ here is is to consider finite cut-offs
(``abschnitts'') $\Bfun^{\langle\kappa-1\rangle}$ given by
\begin{equation}
\Bfun^\approx:=\Bfun^{\langle\kappa-1\rangle}=\Bfun_0+
s\Top[\Bfun_0]+\ldots+s^{\kappa-1}\Top^{\kappa-1}[\Bfun_0],
\label{eq:Bfunapproxdef}
\end{equation}
where the positive integer parameter $\kappa$ will be taken to depend on 
$s=m^{-1}$. We then calculate the error term $\calE_m$ from the
equality \eqref{eq:approxB} in reverse:
\begin{equation*}
\calE_m=\Bfun^\approx-\Bfun_0-s\Top[\Bfun^\approx]=-s^{\kappa}
\Top^{\kappa}[\Bfun_0]=-m^{-\kappa}\Top^\kappa[\Bfun_0].
\end{equation*}
Now, in view of the first estimate of Theorem \ref{lem:basicest}, we have
\begin{equation}
\|\calE_m^\diamond\|_{\Hsp^\infty_{\frac12\sigma_0}}
=m^{-\kappa}
\|(\Top^\kappa[\Bfun_0])^\diamond\|_{\Hsp^\infty_{\frac12\sigma_0}}
\le M_1^\kappa m^{-\kappa}\,\kappa^{2\kappa}
\|\Bfun_0^\diamond\|_{\Hsp^\infty_{\sigma_0}}.
\label{eq:Test0.04}
\end{equation}
It is now a calculus exercise to minimize over the parameter $\kappa$.
To this end, we provide a lemma.

\begin{lem}
For a real parameter $\beta$, consider the function $\eta(t):=\beta t+2t\log t$ 
for $0<t<+\infty$, while $\eta(0):=0$, and let $t_0$ denote the positive real number
$t_0:=\e^{-\frac12\beta-1}$. Then $\eta(t)$ enjoys the estimate 
 \[
 \eta(t)\le
 \begin{cases}
 -2t,\qquad\qquad 0\le t\le t_0,
 \\
 -2t_0+t_0^{-1},\quad t_0\le t\le t_0+1.
 \end{cases}
 \]
\label{lem:calcex}
\end{lem}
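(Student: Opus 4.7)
The plan is to observe first that $t_0$ is the unique critical point of $\eta$, and in fact a minimum. Differentiating yields $\eta'(t)=\beta+2+2\log t$ and $\eta''(t)=2/t>0$, so $\eta$ is convex on $(0,\infty)$; solving $\eta'(t)=0$ gives exactly $t=t_0=\e^{-\beta/2-1}$. A direct substitution produces $\eta(t_0)=\beta t_0+2t_0(-\beta/2-1)=-2t_0$. From convexity, $\eta$ is strictly decreasing on $(0,t_0]$ and strictly increasing on $[t_0,+\infty)$, which will be the organising principle for both estimates.

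For the first range $0\le t\le t_0$, I would introduce $g(t):=\eta(t)+2t$, which is continuous on $[0,\infty)$ with $g(0)=0$ (by the convention $\eta(0)=0$) and $g(t_0)=-2t_0+2t_0=0$. Since $g''(t)=\eta''(t)=2/t>0$, the function $g$ is convex on $(0,t_0]$, so it lies below its chord connecting the endpoints $(0,0)$ and $(t_0,0)$, which is the zero function. Hence $g\le 0$ on $[0,t_0]$, i.e.\ $\eta(t)\le -2t$, as required.

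For the second range $t_0\le t\le t_0+1$, since $\eta$ is increasing on $[t_0,\infty)$ it suffices to bound $\eta(t_0+1)$. Using $\beta+2=-2\log t_0$, we rewrite $\eta'(t)=2\log(t/t_0)$ and compute
\[
\eta(t_0+1)-\eta(t_0)=\int_{t_0}^{t_0+1}2\log(t/t_0)\,\diff t
= 2t_0\bigl[(1+x)\log(1+x)-x\bigr],
\qquad x:=t_0^{-1},
\]
after the substitution $u=t/t_0$. The estimate then reduces to the elementary one-variable inequality $(1+x)\log(1+x)\le x+\tfrac12 x^2$ for $x\ge0$, which I would verify by noting that both sides agree at $x=0$ together with their first derivatives, while the second derivative of the right-hand side majorises $1/(1+x)$. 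Multiplying through by $2t_0=2/x$ yields $\eta(t_0+1)-\eta(t_0)\le x= t_0^{-1}$, hence $\eta(t_0+1)\le -2t_0+t_0^{-1}$, which by monotonicity gives the claim throughout $[t_0,t_0+1]$.

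There is no real obstacle here: the entire argument is a calculus exercise. The only mildly substantive step is the concavity-type inequality $(1+x)\log(1+x)\le x+\tfrac12 x^2$, which is standard.
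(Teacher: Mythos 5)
Your proof is correct. The first range $0\le t\le t_0$ is handled essentially as in the paper: both arguments reduce to the chord bound for a convex function, and your auxiliary function $g(t)=\eta(t)+2t$ with $g(0)=g(t_0)=0$ is just a small repackaging of the same idea.

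The second range $t_0\le t\le t_0+1$ is where you take a genuinely different route. The paper expands $\eta$ around its minimum via Taylor's formula with integral remainder and uses that $\eta''$ is decreasing to obtain the quadratic majorant $\eta(t)\le\eta(t_0)+\tfrac12\eta''(t_0)(t-t_0)^2$ valid for \emph{all} $t\ge t_0$, then evaluates on the interval of length one. You instead exploit that $\eta$ is increasing to the right of $t_0$, so only the endpoint value $\eta(t_0+1)$ needs to be controlled; you evaluate $\int_{t_0}^{t_0+1}\eta'(t)\,\diff t$ in closed form after the rescaling $u=t/t_0$, which brings in the classical inequality $(1+x)\log(1+x)\le x+\tfrac12 x^2$. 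The paper's argument is slightly more structural and yields a uniform parabolic bound above the minimum; yours is more computational but arrives at the same constant $-2t_0+t_0^{-1}$ with no loss. Both are perfectly valid, and your reduction to a one-variable logarithmic inequality is a clean way to present the second estimate.

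Minor notes: your verification of $(1+x)\log(1+x)\le x+\tfrac12 x^2$ could be stated directly via $h(x):=x+\tfrac12 x^2-(1+x)\log(1+x)$, $h(0)=0$, $h'(x)=x-\log(1+x)\ge0$, which is simpler than the two-derivative comparison you sketch. Also, for the record, the paper's displayed Taylor remainder has a sign typo ($(x-t)$ should read $(t-x)$); your version sidesteps that issue entirely.
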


\begin{proof}
We calculate the first two derivatives: 
\[
\eta'(t)=\beta+2+2\log t\quad\text{and}\quad \eta''(t)=2/t.
\]
In particular, it follows that $\eta(t)$ is convex. 
We look for critical points, and find the unique critical point 
$t_0=\e^{-\frac12\beta-1}$ with value
$\eta(t_0)=-2t_0=-2\e^{-\frac12\beta-1}$. At the point $t=t_0$, the function 
$\eta(t)$ has a global minimum.
We attempt to estimate the growth of $\eta(t)$ for $t>t_0$. Since $\eta''(t)$
is decreasing, the integration-by-parts formula shows that
\begin{multline*}
\eta(t)=\eta(t_0)+\int_{t_0}^t (x-t)\eta''(x)\diff x\le 
\eta(t_0)+\int_{t_0}^t (x-t)\eta''(t_0)\diff x
\\
=\eta(t_0)+\frac{\eta''(t_0)}{2}(t-t_0)^2,\qquad t\ge t_0.
\end{multline*}
We implement this for $t$ in the interval $[t_0,t_0+1]$ and obtain
\begin{equation*}
\eta(t)\le \eta(t_0)+\frac{\eta''(t_0)}{2}=-2t_0+t_0^{-1},
\qquad t_0\le t\le t_0+1.
\end{equation*}
Also, by convexity and the fact that $\lim \eta(t)=0$ as $t\to0^+$, 
we have the estimate
\begin{equation*}
\eta(t)\le \frac{\eta(t_0)}{t_0}t=-2t,\qquad 0\le t\le t_0.
\end{equation*}
which completes the proof of the lemma.
\end{proof}

We shall apply the lemma to the function
\[
\eta(k)=\log(M_1^k m^{-k}k^{2k})= \beta k+2k\log k\quad
\text{with}\quad \beta=\log M_1-\log m.
\]
Then $t_0=\e^{-\frac12\beta-1}=\e^{-1}M_1^{-\frac12}m^{\frac12}$
and it follows from Lemma \ref{lem:calcex} that if $\kappa_\ast(m)$ 
denotes the unique integer in the interval $[t_0,t_0+1[$,
\[
\eta(\kappa_\ast(m))\le -2t_0+t_0^{-1}=-2\e^{-1}M_1^{-\frac12}m^{\frac12}
+\e\,M_1^{\frac12}m^{-\frac12}\le -\epsilon m^{\frac12}+\log2,
\]
where $\epsilon:=2\e^{-1}M_1^{-\frac12}$, provided that $m$ is big enough, 
in this case $m\ge 25 M_1$ suffices. 
After exponentiation, we find that 
\begin{equation}
M_1^{\kappa_\ast(m)} m^{-\kappa_\ast(m)}\,(\kappa_\ast(m))^{2\kappa_\ast(m)}
\le 2\exp(-\epsilon\sqrt{m}).
\label{eq:esteta3}
\end{equation}
\emph{This parameter value $\kappa=\kappa_\ast(m)$ is 
the choice we insert into the definition} \eqref{eq:Bfunapproxdef}. It now
follows from \eqref{eq:Test0.04} and \eqref{eq:esteta3} that
with  $\epsilon=2\e^{-1}M_1^{-\frac12}$,
\begin{multline}
\|\calE_m^\diamond\|_{\Hsp^\infty_{\frac12\sigma_0}}
=m^{-\kappa_\ast(m)}
\|(\Top^\kappa[\Bfun_0])^\diamond\|_{\Hsp^\infty_{\frac12\sigma_0}}
\\
\le M_1^{\kappa_\ast(m)}m^{-\kappa_*(m)}(\kappa_\ast(m))^{2\kappa_\ast(m)}
\le2\e^{-\epsilon\sqrt{m}}\|\Bfun_0^\diamond\|_{\Hsp^\infty_{\sigma_0}},
\label{eq:Test0.05}
\end{multline}
provided that $m\ge 25 M_1$. This controls the error term $\calE_m$, but
we will also need to estimate $\Bfun^\approx$ itself. By the triangle inequality
and the first estimate of Theorem \ref{lem:basicest}, combined with the first
estimate of Lemma \ref{lem:calcex}
with $\beta=\log M_1-\log m$,
\begin{multline}
\|(\Bfun^\approx)^\diamond\|_{\Hsp^\infty_{\frac12\sigma_0}}
\le\sum_{j=0}^{\kappa_\ast(m)-1}
m^{-j}\big\|(\Top^{j}[\Bfun_0])^\diamond\big\|_{\Hsp^\infty_{\frac12\sigma_0}}
\\
\le\sum_{j=0}^{\kappa_\ast(m)-1}
M_1^j m^{-j}j^{2j} \|\Bfun_0^\diamond\|_{\Hsp^\infty_{\sigma_0}}
=\sum_{j=0}^{\kappa_\ast(m)-1}
\e^{\eta(j)} \|\Bfun_0^\diamond\|_{\Hsp^\infty_{\sigma_0}}
\\
\le\sum_{j=0}^{\kappa_\ast(m)-1}
\e^{-2j} \|\Bfun_0^\diamond\|_{\Hsp^\infty_{\sigma_0}}\le
\frac{\|\Bfun_0^\diamond\|_{\Hsp^\infty_{\sigma_0}}}{1-\e^{-2}},
\label{eq:Bfunapproxdef2}
\end{multline}
so the norm of $\Bfun^\approx$ is uniformly bounded. To control 
$\dbar\Bfun^\approx$ as well, we may rely instead on the second estimate 
of Theorem \ref{lem:basicest}.
We find that
\begin{multline}
\big\|\dbar_w(\Bfun^\approx)^\diamond\big\|_{\Hsp^\infty_{\frac12\sigma_0}}
\le\sum_{j=0}^{\kappa_\ast(m)-1}
m^{-j}\big\|\dbar_w(\Top^{j}[\Bfun_0])^\diamond\big\|_{\Hsp^\infty_{\frac12\sigma_0}}
\\
\le\sum_{j=0}^{\kappa_\ast(m)-1}
M_1^j m^{-j}(j+1)^{2j+1} \|\Bfun_0^\diamond\|_{\Hsp^\infty_{\sigma_0}}
\le \e^2\sum_{j=0}^{\kappa_\ast(m)-1}
(j+1)M_1^j m^{-j}j^{2j} \|\Bfun_0^\diamond\|_{\Hsp^\infty_{\sigma_0}}
\\
=\e^2\sum_{j=0}^{\kappa_\ast(m)-1}
(j+1)\,\e^{\eta(j)} \|\Bfun_0^\diamond\|_{\Hsp^\infty_{\sigma_0}}
\le\e^2\sum_{j=0}^{\kappa_\ast(m)-1}
(j+1)\e^{-2j} \|\Bfun_0^\diamond\|_{\Hsp^\infty_{\sigma_0}}\le
\frac{\e^2\|\Bfun_0^\diamond\|_{\Hsp^\infty_{\sigma_0}}}{(1-\e^{-2})^2},
\label{eq:Bfunapproxdef3}
\end{multline}
where in the middle we used the elementary estimate $(j+1)^j\le\e\,j^j$
which we recognize as $(1+j^{-1})^j\le \e$. This estimate holds trivially 
for $j=0$ since we interpret $0^0$ as $1$.



\smallskip

\subsection{The approximate functions $\Afun^\approx$ and $\Ffun^\approx$}
\label{subsec-AF}
The equations \eqref{eq:Abeta1} and \eqref{eq:solF0.11} give us $\Afun^\approx$
and $\Ffun^\approx$ in terms of $\Bfun^\approx$, which in turn is given by
\eqref{eq:Bfunapproxdef} with $\kappa=\kappa_\ast(m)$, where $\kappa_\ast(m)$ 
is as in the preceding subsection. The function $\dbar\Bfun^\approx$ gets estimated
by \eqref{eq:Bfunapproxdef3}, and division by $\bar \Hfun_\Rfun$ is fine since
its polarization is in $\Hsp^\infty_{\sigma_0}$ by assumption and it is bounded away
from $0$ in a fixed polarized neighborhood of $\T$ (so taking $\sigma_0>0$ small
enough we may safely divide). In view of \eqref{eq:pmest1}, 
\begin{equation}
\bigg\|\Pop_{H^2_{-}}\bigg[\frac{\dbar\Bfun^\approx}{\bar\Hfun_\Rfun}\bigg]
\bigg\|_{L^\infty(\T(0,\rho_\ast))}\le\frac{6\sigma_0^{-1}\e^2}{(1-\e^{-1})^2}
\|\Bfun_0^\diamond\|_{\Hsp^\infty_{\sigma_0}}
\big\|(\bar\Hfun_\Rfun^\diamond)^{-1}\big\|_{\Hsp^\infty_{\frac12\sigma_0}}
\label{eq:Popest0.1}
\end{equation}
if the radius $\rho_\ast<1$ is given by
\[
\rho_\ast:=\rho(\tfrac12\sigma_0)=\frac{2}{\sigma_0+\sqrt{4+\sigma_0^2}}.
\]
In the same manner, we obtain from \eqref{eq:pmest1} that
\begin{equation}
\bigg\|\Pop_{H^2_0}\bigg[\frac{\dbar\Bfun^\approx}{\bar\Hfun_\Rfun}\bigg]
\bigg\|_{L^\infty(\T(0,\rho^{-1}_\ast))}\le\frac{6\sigma_0^{-1}\e^2}{(1-\e^{-1})^2}
\|\Bfun_0^\diamond\|_{\Hsp^\infty_{\sigma_0}}
\big\|(\bar\Hfun_\Rfun^\diamond)^{-1}\big\|_{\Hsp^\infty_{\frac12\sigma_0}},
\label{eq:Popest0.2}
\end{equation}
where we recall from Subsection \ref{ss:Herglotz}
that $\Pop_{H^2_0}$ denotes the orthogonal projection 
$L^2(\T)\to H^2_0$.
The identity
\[
\mathrm{conj}\,\Pop_{H^2_0}\bigg[\frac{\dbar\Bfun^\approx}{\bar\Hfun_\Rfun}\bigg]
\bigg(\frac{1}{\bar z}\bigg)
=\Pop_{H^2_{-,0}}\bigg[\frac{\partial\bar\Bfun^\approx}{\Hfun_\Rfun}\bigg](z)
\]
where "conj" denote complex conjugation, shows that \eqref{eq:Popest0.2} 
gives the equivalent estimate
\begin{equation}
\bigg\|\Pop_{H^2_{-,0}}\bigg[\frac{\partial\bar\Bfun^\approx}{\Hfun_\Rfun}\bigg]
\bigg\|_{L^\infty(\T(0,\rho_\ast))}\le\frac{6\sigma_0^{-1}\e^2}{(1-\e^{-1})^2}
\|\Bfun_0^\diamond\|_{\Hsp^\infty_{\sigma_0}}
\big\|(\bar\Hfun_\Rfun^\diamond)^{-1}\big\|_{\Hsp^\infty_{\frac12\sigma_0}}.
\label{eq:Popest0.3}
\end{equation}
We obtain as a consequence of \eqref{eq:Popest0.2} that
\begin{multline}
\bigg\|\Afun^\approx-\frac{\apar_1}{\zeta\Hfun_\Rfun}\bigg\|_{L^\infty(\T(0,\rho_\ast))}
\le m^{-1}\frac{6\sigma_0^{-1}(2\pi)^{-\frac12}\e^2}{(1-\e^{-1})^2}
\\
\times\|(\zeta\Hfun_\Rfun)^{-1}\|_{L^\infty(\T(0,\rho_\ast))}
\|\Bfun_0^\diamond\|_{\Hsp^\infty_{\sigma_0}}
\big\|(\bar\Hfun_\Rfun^\diamond)^{-1}\big\|_{\Hsp^\infty_{\frac12\sigma_0}}
=\Ordo(m^{-1}),
\label{eq:Afunest0}
\end{multline}
and, analogously, from \eqref{eq:Popest0.3} that
\begin{multline}
\big\|\Ffun^\approx-{\apar_1}{\Hfun_\Rfun}\big\|_{L^\infty(\T(0,\rho_\ast))}
\le m^{-1}\frac{6\sigma_0^{-1}(2\pi)^{-\frac12}\e^2}{(1-\e^{-1})^2}
\\
\times\|\Hfun_\Rfun\|_{L^\infty(\T(0,\rho_\ast))}
\|\Bfun_0^\diamond\|_{\Hsp^\infty_{\sigma_0}}
\big\|(\bar\Hfun_\Rfun^\diamond)^{-1}\big\|_{\Hsp^\infty_{\frac12\sigma_0}}
=\Ordo(m^{-1})
\label{eq:Ffunest0}
\end{multline}
as $m\to+\infty$. In particular,
\begin{equation}
\big\|\Afun^\approx\big\|_{L^\infty(\T(0,\rho_\ast))}=\Ordo(1)
\quad\text{and}\quad
\big\|\Ffun^\approx\big\|_{L^\infty(\T(0,\rho_\ast))}=\Ordo(1)
\label{eq:AF0}
\end{equation}
as $m\to+\infty$. 
We recall the definition of the function $\Bfun^\approx$:
\[
\Bfun^\approx=\Bfun^{\langle \kappa-1\rangle}=\Bfun_0+\ldots+
m^{-\kappa+1}\Top^{\kappa-1}\Bfun_0,\qquad \kappa:=\kappa_\ast(m),
\]
where $\kappa_\ast(m)$ is the unique integer in the interval $[t_0,t_0+1[$
with $t_0=\e^ {-1}M_1^{-\frac12}m^{\frac12}$. Also, the functions 
$\Afun^\approx$ and $\Ffun^\approx$ are given in terms of $\Bfun^\approx$ by
the relations \eqref{eq:Abeta1}  and \eqref{eq:solF0.11}, respectively. 
We gather our observations in a theorem.

\begin{thm}
With the above choices of $\Bfun^\approx$, $\Afun^\approx$, and $\Ffun^\approx$,
the size of the error term $\calE_m$ in the equality \eqref{eq:approxB} is 
controlled by the estimate \eqref{eq:Test0.05}, whereas the size of 
$\Bfun^\approx$ itself is controlled by \eqref{eq:Bfunapproxdef2} and 
\eqref{eq:Bfunapproxdef3}. As for the functions $\Afun^\approx$ and
$\Ffun^\approx$, their norms are uniformly bounded according to 
\eqref{eq:AF0}, and, to higher precision, controlled by \eqref{eq:Afunest0} and
\eqref{eq:Ffunest0}, respectively.
\label{lem:collect}
\end{thm}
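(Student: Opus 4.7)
The plan is essentially to record each estimate in the order it has been prepared in Subsections \ref{subsec-B} and \ref{subsec-AF}, and to emphasize that the key analytic input comes from Theorem \ref{lem:basicest} (growth of $\Top^k[\Bfun_0]$) combined with the calculus Lemma \ref{lem:calcex} used to select the truncation index $\kappa=\kappa_\ast(m)$ in the partial Neumann sum \eqref{eq:Bfunapproxdef}.

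I would start with the error term. By direct substitution of $\Bfun^\approx=\Bfun_0+s\Top[\Bfun_0]+\ldots+s^{\kappa-1}\Top^{\kappa-1}[\Bfun_0]$ into \eqref{eq:approxB}, the contributions telescope and leave precisely $\calE_m=-s^\kappa\Top^\kappa[\Bfun_0]=-m^{-\kappa}\Top^\kappa[\Bfun_0]$. The first bound of Theorem \ref{lem:basicest} then yields $\|\calE_m^\diamond\|_{\Hsp^\infty_{\frac12\sigma_0}}\le M_1^\kappa m^{-\kappa}\kappa^{2\kappa}\|\Bfun_0^\diamond\|_{\Hsp^\infty_{\sigma_0}}$. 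To minimize this as a function of $\kappa$, I would apply Lemma \ref{lem:calcex} with $\beta=\log M_1-\log m$, whose minimizer is $t_0=\e^{-1}M_1^{-\frac12}m^{\frac12}$. Choosing $\kappa_\ast(m)$ to be the unique integer in $[t_0,t_0+1[$ gives $M_1^{\kappa_\ast(m)}m^{-\kappa_\ast(m)}\kappa_\ast(m)^{2\kappa_\ast(m)}\le 2\e^{-\epsilon\sqrt m}$ with $\epsilon=2\e^{-1}M_1^{-\frac12}$ (valid for $m\ge 25 M_1$), which is exactly \eqref{eq:Test0.05}.

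Next, for $\Bfun^\approx$ itself I would use the triangle inequality on \eqref{eq:Bfunapproxdef}, bound each summand by Theorem \ref{lem:basicest}, and rewrite the resulting sum in terms of $\eta(j)=\beta j+2j\log j$. Because every index $j<\kappa_\ast(m)$ lies in the interval $[0,t_0]$ where Lemma \ref{lem:calcex} provides $\eta(j)\le -2j$, the sum is dominated by a geometric series $\sum_j \e^{-2j}$, which gives the uniform bound \eqref{eq:Bfunapproxdef2}. The $\dbar_w$-estimate \eqref{eq:Bfunapproxdef3} is obtained in exactly the same way from the second bound of Theorem \ref{lem:basicest}, where the extra factor $(j+1)$ produced by $(j+1)^{2j+1}\le \e\,(j+1)j^{2j}$ is harmlessly absorbed into the same geometric sum $\sum_j (j+1)\e^{-2j}$. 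The only subtle point is to verify that $\kappa_\ast(m)-1\le t_0$ so that the convexity branch of Lemma \ref{lem:calcex} applies uniformly along the entire truncation range; this is immediate from the definition.

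Finally, the bounds \eqref{eq:Afunest0}, \eqref{eq:Ffunest0}, and hence \eqref{eq:AF0} on $\Afun^\approx$ and $\Ffun^\approx$ follow by inserting the estimates just obtained into their defining formulas \eqref{eq:Abeta1} and \eqref{eq:solF0.11}. Since $\Hfun_\Rfun$ is bounded and bounded below on a polarized neighborhood of $\T$ (shrinking $\sigma_0$ if necessary so that $(\bar\Hfun_\Rfun^\diamond)^{-1}\in\Hsp^\infty_{\frac12\sigma_0}$), division by $\bar\Hfun_\Rfun$ or $\Hfun_\Rfun$ is a bounded operation on the polarized spaces. Applying the Szeg\H{o}-projection bound \eqref{eq:pmest1} to $\dbar\Bfun^\approx/\bar\Hfun_\Rfun$ yields \eqref{eq:Popest0.1}; the conjugate symmetry relating $\Pop_{H^2_0}$ and $\Pop_{H^2_{-,0}}$ then gives \eqref{eq:Popest0.3} and hence \eqref{eq:Ffunest0}. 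Assembling all the pieces proves every assertion of the theorem. I expect that the only non-mechanical step is the careful accounting in the second paragraph, where one must check that the optimal index $\kappa_\ast(m)$ sits squarely in the subinterval where Lemma \ref{lem:calcex} gives geometric decay; everything else is a matter of combining prior estimates.
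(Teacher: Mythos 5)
Your proposal is correct and retraces the paper's own route: the theorem is stated as a collection of the estimates already established in Subsections \ref{subsec-B} and \ref{subsec-AF}, and you reproduce exactly that sequence — the telescoping definition of $\calE_m$, Theorem \ref{lem:basicest} plus Lemma \ref{lem:calcex} with $\beta=\log M_1-\log m$ to pick $\kappa_\ast(m)\in[t_0,t_0+1[$, the triangle-inequality bounds on $\Bfun^\approx$ and $\dbar\Bfun^\approx$ via geometric-type sums over $\e^{\eta(j)}\le\e^{-2j}$, and the Szeg\H{o}-projection bound \eqref{eq:pmest1} fed into \eqref{eq:Abeta1} and \eqref{eq:solF0.11}. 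Only trifling constant-level slips appear (the paper uses $(j+1)^{2j}\le\e^2 j^{2j}$, giving an $\e^2$ where you wrote $\e$, and $\sum_j(j+1)\e^{-2j}$ is a differentiated geometric series rather than a geometric one), neither of which affects the argument.
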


This result was implemented back in Subsection \ref{ss:Horm} to establish
the necessary facts which support Theorem \ref{thm:main}.


\begin{thebibliography}{1}

\bibitem{ahm1} 
Ameur, Y., Hedenmalm, H., Makarov, N., \emph{Berezin 
transform in polynomial Bergman spaces}. Comm. Pure Appl. Math. 
\textbf{63} (2010), no. 12, 1533-1584.

\bibitem{ahm2} 
Ameur, Y., Hedenmalm, H., Makarov, N., \emph{Fluctuations
of random normal matrices}. Duke Math. J. \textbf{159} (2011), 31-81.



\bibitem{BBWN}
Berman, R., Boucksom, S., Witt Nystr\"om, D.,
Fekete points and convergence towards equilibrium measures on complex manifolds.
Acta Math. 207 (2011), no. 1, 1-27.

\bibitem{BBS} Berman, R., Berndtsson, B. Sj{\"o}strand, J., 
{\em A direct approach to Bergman kernel asymptotics for positive line 
bundles}. Ark. Mat. {\bf 46} (2008), 197-217.


\bibitem{Carleman}
Carleman, T., \emph{\"Uber die Approximation analytischer
Funktionen durch lineare Aggregate von vorgegebenen Potenzen}.
Ark. Mat. Astron. Fys. \textbf{17} (1923), 1-30.

\bibitem{Carl2}
Carleman, T., \emph{\'Edition compl\`ete des articles de Torsten Carleman}.
Edited by \AA{}. Pleijel, in collaboration with L.
Lithner and J. Odhnoff.  Published by the Mittag-Leffler mathematical
institute. Litos Repro\-tryck, Malm\"o, 1960.


\bibitem{charles}
Charles, L., 
\emph{Berezin-Toeplitz operators, a semi-classical approach}. 
Comm. Math. Phys. \textbf{239} (2003), no. 1-2, 1-28.


\bibitem{CKMRV} Cohn, H., Kumar, A., Miller, S. D., Radchenko, D., 
Viazovska, M., \emph{Universal optimality of the $E_8$ and Leech lattices
and interpolation formulas}. arXiv:1902.05438 

\bibitem{Deiftbook} Deift, P. A.,
\emph{Orthogonal polynomials and random matrices: a Riemann-Hilbert
approach}.
Courant Lecture Notes in Mathematics, \textbf{3}. New York University,
Courant Institute of Mathematical Sciences, New York; American Mathematical
Society, Providence, RI, 1999.


\bibitem{DeiftZhou}
Deift, P., Zhou, X.,
\emph{A steepest descent method for oscillatory Riemann-Hilbert problems.
Asymptotics for the MKdV equation}.
Ann. of Math. (2) \textbf{137} (1993), no. 2, 295-368.

\bibitem{Duren}
Duren, P. L., \emph{Theory of $H^p$ spaces}.
Pure and Applied Mathematics, Vol. \textbf{38}. Academic Press, 
New York-London, 1970. 



\bibitem{Its1} A. S. Fokas, A. R. Its, A. V. Kitaev,
\emph{Discrete Painlev{\'e} equations and their appearance in
quantum gravity.} Comm. Math. Phys. {\bf 142}
(1991), no.\ 2, 313-344.

\bibitem{FIK}
Fokas, A. S., Its, A. R., Kitaev, A. V.,
\emph{The isomonodromy approach to matrix models in 2D quantum gravity}.
Comm. Math. Phys. \textbf{147} (1992), no. 2, 395-430.


\bibitem{Garnett}
Garnett, J. B.,
\emph{Bounded analytic functions}.
Pure and Applied Mathematics, \textbf{96}. Academic Press, Inc., 
New York-London, 1981.



\bibitem{gvt}
Gustafsson, B., Vasil'ev, A., Teodorescu, R., 
{\em Classical and Stochastic Laplacian Growth}. 
Advances in Mathematical Fluid Mechanics, Birkh{\"a}user, 
Springer International Publishing, 2014.






\bibitem{HedAM91} Hedenmalm, H., \emph{Formal power series and nearly analytic
functions}. Arch. Math. \textbf{57} (1991), 61-70.
  
\bibitem{HM} 
Hedenmalm, H., Makarov, N., \emph{Coulomb gas ensembles 
and Laplacian growth}. Proc. Lond. Math. Soc. (3) {\bf 106} (2013), 859-907.


\bibitem{ho}
Hedenmalm, H., Olofsson, A.,
{\em Hele-Shaw flow on weakly hyperbolic surfaces.}
Indiana Univ. Math. J. {\bf 54} (2005), no. 4, 1161-1180. 

\bibitem{HS} 
Hedenmalm, H., Shimorin, S., \emph{Hele-Shaw flow 
on hyperbolic surfaces}. J. Math. Pures Appl. {\bf 81} (2002), 187-222.

\bibitem{HW1} Hedenmalm, H., Wennman, A.,
\emph{Planar orthogonal polynomials and boundary universality in
the normal matrix model}. Acta Math., to appear.

\bibitem{HW2} Hedenmalm, H., Wennman, A., \emph{Off-spectral analysis of        
Bergman kernels}. Comm. Math. Phys. \textbf{373} (2020), 1049-1083.

\bibitem{HW3} Hedenmalm, H., Wennman, A., \emph{Riemann-Hilbert hierarchies
for hard edge planar orthogonal polynomials}. Submitted.
  

\bibitem{ItsTakhtajan} Its, A. R., Takhtajan, L. A.,
\emph{Normal matrix models, $\bar\partial$-problem, and orthogonal polynomials 
on the complex plane}. arXiv:0708.3867


\bibitem{KleinMcL}
Klein, C., McLaughlin, K.,
\emph{Spectral approach to D-bar problems}. 
Comm. Pure Appl. Math. \textbf{70} (2017), no. 6, 1052-1083.

\bibitem{Koosis}
Koosis, P., 
\emph{Introduction to $H_p$ spaces}. 
Second edition. With two appendices by V. P. Havin. 
Cambridge Tracts in Mathematics, \textbf{115}. Cambridge University Press, 
Cambridge, 1998. 



\bibitem{macchi}
Macchi, O., 
\emph{The coincidence approach to stochastic point processes}.
Advances in Appl. Probability \textbf{7} (1975), 83-122.


\bibitem{Nir55} Nirenberg, L.
\emph{An abstract form of the nonlinear Cauchy-Kowalewski theorem}.
J. Differential Geometry \textbf{6} (1972), 561-576.

\bibitem{Nish77}
Nishida, T.,
\emph{A note on a theorem of Nirenberg}.
J. Differential Geometry \textbf{12} (1977), no. 4, 629-633.






\bibitem{RSNg}
Rouby, O., Sj\"ostrand, J., V\~u Ng\d{o}c, S.,
\emph{Analytic Bergman operators in the semiclassical limit}. 
Duke Math. J. \textbf{169} (2020), no. 16, 3033-3097.





\bibitem{serf}
Serfaty, S.,
\emph{Ginzburg-Landau vortices, Coulomb gases, and renormalized energies}. 
J. Stat. Phys. \textbf{154} (2014), no. 3, 660-680.






\bibitem{szego} Szeg{\H{o}}, G., \emph{\"Uber orthogonale Polynome die
zu einer gegebenen Kurve der komplexen Ebene geh\"oren}.
Math. Z. \textbf{9} (1921), 218-270.

\bibitem{Szeg-book} Szeg{\H{o}}, G., \emph{Orthogonal polynomials}.
Fourth edition. American Mathematical Society, Colloquium Publications,
Vol. XXIII. American Mathematical Society, Providence, R.I., 1975.




\bibitem{Zelditch}
Zelditch, S., \emph{Szeg\H{o} kernels and a theorem of Tian}.
Int. Math. Res. Notices (IMRN) \textbf{1998} (1998), 317-331.




\end{thebibliography}
\end{document}